\numberwithin{equation}{section}
\theoremstyle{plain}
\newtheorem{theorem}{Theorem}[section]
\newtheoremstyle{named}{}{}{\itshape}{}{\bfseries}{.}{.5em}{\thmnote{#3}}
\theoremstyle{named}
\newtheorem*{namedtheorem}{Theorem}
\theoremstyle{theorem}
\newtheorem{prop}[theorem]{Proposition}
\newtheorem{lem}[theorem]{Lemma}
\newtheorem{cor}[theorem]{Corollary}
\theoremstyle{definition}
\newtheorem{defn}[theorem]{Definition}
\newtheorem{rmk}[theorem]{Remark}
\newcommand{\R}{\mathbb{R}}
\newcommand{\C}{\mathbb{C}}
\newcommand{\Q}{\mathbb{Q}}
\newcommand{\Z}{\mathbb{Z}}
\newcommand{\N}{\mathbb{N}}
\newcommand{\F}{\mathscr{F}}
\newcommand{\Sym}{\mathscr{S}}
\newcommand{\me}{\mathrm{e}}
\numberwithin{figure}{section}
\begin{document}

\title{On the inhomogeneity of the Mandelbrot set}

\author{Yusheng Luo}

\address{%
Department of Mathematics, Harvard University, One Oxford Street, Cambridge, MA 02138 USA}

\email{yusheng@math.harvard.edu}

\date{29 August 2018}

\begin{abstract}
We will show the Mandelbrot set $M$ is locally conformally inhomogeneous: the only conformal map $f$ defined in an open set $U$ intersecting $\partial M$ and satisfying $f(U\cap\partial M)\subset \partial M$ is the identity map. The proof uses the study of local conformal symmetries of the Julia sets of polynomials: we will show in many cases, the dynamics can be recovered from the local conformal structure of the Julia sets.
\end{abstract}

\maketitle

\maketitle

\section{Introduction}\label{sec1}
Given a monic polynomial $f(z)$, the {\em filled Julia set} is 
$$
K=K(f)=\{z\in \C: (f^n(z))_{n\in\N} \,\mbox{ is bounded}\}
$$
and the {\em Julia set} is $J = \partial K$.
For quadratic family $f_c(z) = z^2+c$, 
the {\em Mandelbrot set} $M$ can be defined as the subset in the {\em parameter plane} such that the Julia set $J_c$ is connected,
$$
M = \{c\in\C: J_c \,\mbox{ is connected}\}
$$

\begin{figure}
  \centering
  \includegraphics[width=\textwidth]{./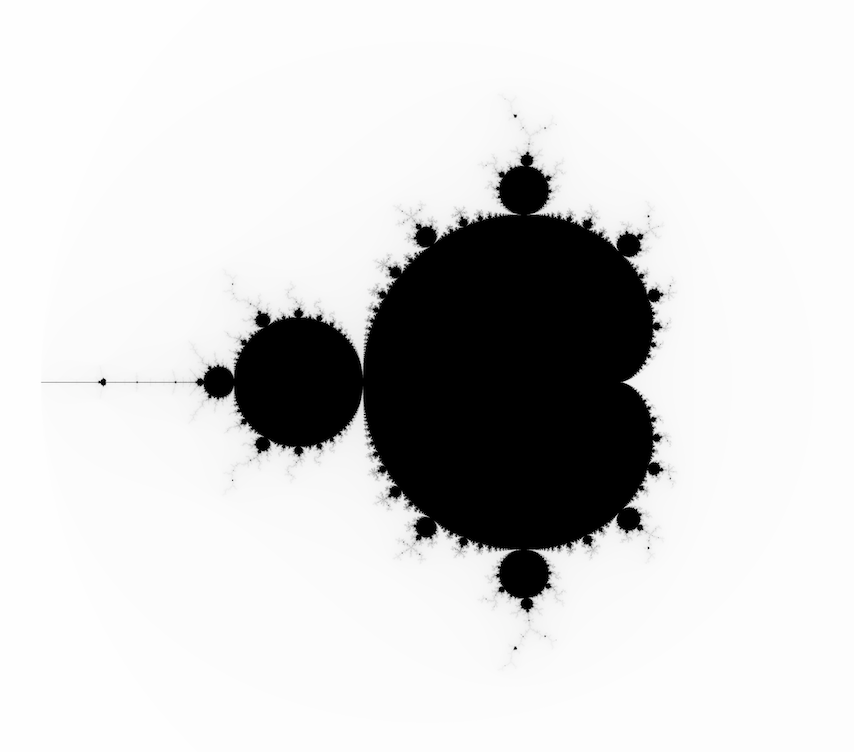}
  \caption{The Mandelbrot set}
\end{figure}

Let $A$ be a compact subset of $\C$. We call an orientation preserving homeomorphism 
$$
H:U \longrightarrow V 
$$ 
a {\em local conformal symmetry} of $A$ if $H$ is conformal, $U$ is connected and $H$ sends $U\cap A$ onto $V\cap A$.
We say a local conformal symmetry is {\em trivial} if it is the identity map or $U\cap A =\emptyset$.

In this paper, we will show
\begin{theorem}\label{maintheorem}
The boundary of the Mandelbrot set $\partial M$ admits no non-trivial local conformal symmetries.
\end{theorem}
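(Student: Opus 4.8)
The plan is to reduce the rigidity of $\partial M$ to a rigidity property of polynomial Julia sets, using Tan Lei's theorem on the asymptotic similarity between the Mandelbrot set and the Julia sets $J_c$ near Misiurewicz parameters. Suppose, for contradiction, that $H\colon U\to V$ is a non-trivial local conformal symmetry of $\partial M$. Since $H$ is biholomorphic and $U$ is connected, if $H$ agrees with the identity on a subset of $U$ having an accumulation point in $U$, then $H=\mathrm{id}$; because $\partial M$ is perfect and $U\cap\partial M\neq\emptyset$, it therefore suffices to prove $H(c_0)=c_0$ for $c_0$ ranging over a dense subset of $U\cap\partial M$. For this dense subset I take the Misiurewicz parameters in $U$ --- those $c_0$ for which the critical value $c_0=f_{c_0}(0)$ is strictly preperiodic --- other than the Chebyshev parameter $c_0=-2$; these are dense in $\partial M$. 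Fix such a $c_0$, let $p_0$ be the repelling periodic point hit by the critical value, of period $m$ and multiplier $\lambda_0=(f_{c_0}^{m})'(p_0)$ with $|\lambda_0|>1$, and put $c_1:=H(c_0)$.

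First I would record the local picture at $c_0$. Since the critical point $0$ is strictly preperiodic it is not in the forward orbit of $c_0$, so $f_{c_0}^{k}$ restricts to a conformal equivalence of a neighbourhood of $c_0$ with a neighbourhood of $p_0$ carrying $J_{c_0}$ to $J_{c_0}$; as $f_{c_0}^{m}$ is conformally linearisable near $p_0$, the germ of $J_{c_0}$ at $c_0$ is conformally equivalent to an asymptotically self-similar germ whose rescalings by $\lambda_0^{n}$ converge to a model set $\F$ with $\lambda_0\F=\F$. Tan Lei's theorem (with the refinements of Douady--Hubbard and Rees) asserts that the rescalings of $M$ about $c_0$ converge to the same $\F$, up to an affine normalisation. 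Now $H$ is conformal at $c_0$, so the germ of $\partial M$ at $c_1$ is conformally equivalent to the germ of $\partial M$ at $c_0$, hence is asymptotically self-similar with multiplier $\lambda_0$ and model affinely equivalent to $\partial\F$. I would then argue that among parameters in $\partial M$ only Misiurewicz parameters $\neq -2$ carry a germ of this shape (the other asymptotically self-similar parameters --- the renormalisation/Feigenbaum ones --- are \emph{non-linearly} self-similar, and cusps and arc points are not self-similar with a fractal model), so $c_1$ is again such a parameter with a repelling cycle of multiplier $\lambda_0$; running the first step backwards, the germ of $J_{c_1}$ at its cycle is conformally equivalent, up to the affine normalisation, to the germ of $J_{c_0}$ at $p_0$.

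The core of the argument is the rigidity of Julia sets, and this is where I expect the real work: for a postcritically finite quadratic polynomial whose connected Julia set is not a segment, the dynamics near a repelling cycle is recovered from the local conformal structure of the Julia set. Granting this, the conformal equivalence produced above must --- after composition with the deck involution $z\mapsto -z$ if necessary --- conjugate $f_{c_0}^{m}$ near $p_0$ to $f_{c_1}^{m}$ near its cycle; propagating this conjugacy along backward orbits, which are dense in the dendrites $J_{c_0}$ and $J_{c_1}$, while taking care at the finite postcritical sets, yields a topological conjugacy between $f_{c_0}$ and $f_{c_1}$ on their Julia sets. By Thurston rigidity this is realised by a conformal conjugacy, which in the family $z^{2}+c$ forces $c_0=c_1$ (an orientation-reversing $H$ could a priori yield only $c_1=\bar c_0$, which is why orientation-preservation is used). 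Hence $H(c_0)=c_0$ for every Misiurewicz parameter $\neq -2$ in $U$, a dense subset of $U\cap\partial M$, and so $H=\mathrm{id}$ on $U$, contradicting non-triviality.

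The main obstacle is the Julia-set rigidity statement: that the local conformal structure of a Julia set near a repelling cycle records the multiplier, and with it the linearised dynamics. Polynomial Julia sets display several local geometries --- genuinely fractal pieces, Cantor-set pieces, and real-analytic arcs --- and the argument must treat each; the arc case admits a positive-dimensional family of local conformal symmetries, so one cannot argue pointwise and must instead invoke that an arc inside a polynomial Julia set forces the map to be a power map or a Chebyshev map, which is exactly why $c=-2$ is excluded. Even in the fractal dendrite case relevant here, the delicate point is to rule out conformal self-symmetries of the germ beyond the dynamical scalings and deck transformations; and a further subtlety is to check that the merely \emph{asymptotic} similarity supplied by Tan Lei's theorem, rather than an honest local conformal equivalence between $M$ and $J_{c_0}$, already suffices, for which one needs the convergence of rescalings to be controlled strongly enough that the limit model is a genuine conformal invariant of the germ.
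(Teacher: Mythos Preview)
Your overall architecture---pick a dense subset of $\partial M$, invoke Tan Lei's theorem to transport the local germ of $\partial M$ to a germ of a Julia set, and then appeal to a local rigidity statement for Julia sets---is exactly the paper's strategy, and you have correctly identified the Julia-set rigidity as the substantive content (it is the paper's Theorem~\ref{SymJulia}). There is, however, one genuine gap.

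You need $c_1=H(c_0)$ to again be a Misiurewicz parameter, because Tan Lei's theorem must be applied a second time at $c_1$ in order to compare the limit model there with a Julia set $J_{c_1}$. Your justification (``among parameters in $\partial M$ only Misiurewicz parameters $\neq -2$ carry a germ of this shape'') is not a proof: a classification of the points of $\partial M$ at which $\partial M$ is asymptotically linearly self-similar with a prescribed fractal limit is not available, and the parenthetical sketch (Feigenbaum points scale nonlinearly, cusps and arcs are smooth, etc.) does not exclude the possibility that $c_1$ is some other exotic parameter at which $\partial M$ happens to have limit model affinely equivalent to $\mathscr{F}$. Without knowing that $c_1$ is Misiurewicz you cannot produce a local conformal symmetry between $J_{c_0}$ and $J_{c_1}$, and the argument stalls.

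The paper sidesteps this completely by choosing as its dense subset not all Misiurewicz parameters but the \emph{branch points} of $M$ (points whose removal separates $M$ into at least three components). Being a branch point is a purely topological property, so any homeomorphism---in particular $H$---must carry branch points to branch points; the Branch Theorem (Section~\ref{sec2}) then says every branch point is Misiurewicz, so Tan Lei applies automatically at both $c_0$ and $c_1$. Branch points are dense in $\partial M$, so this suffices. With this single change---replace ``Misiurewicz parameter $\neq -2$'' by ``branch point'' throughout---your proposal becomes the paper's proof; the exclusion of $-2$ becomes unnecessary since $-2$ is a tip, not a branch point.
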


\begin{rmk}
For a fixed $d\geq 2$, and let $f_c(z) = z^d+c$. The {\em Multibrot set} $M_d \subset \C$ is defined as the set of $c$ such that the Julia set $J(f_c)$ is connected.
The traditional Mandelbrot  set is the quadratic version $M_2$.
The above results can be easily generalized to the Multibrot sets.
The proofs are essentially the same.
\end{rmk}

As a corollary of Theorem \ref{maintheorem}, we have the following result which is proved for the Julia set of polynomials in \cite{GKN14}.
\begin{cor}\label{cor1}
The boundary of the Mandelbrot set $\partial M$ is not the Julia set of any rational map.
\end{cor}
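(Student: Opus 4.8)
The plan is to deduce Corollary~\ref{cor1} from Theorem~\ref{maintheorem} by showing that the Julia set of a rational map always carries non-trivial local conformal symmetries coming from its own dynamics. So suppose, for contradiction, that $\partial M = J(R)$ for some rational map $R$. Since $\partial M$ is infinite (indeed uncountable), $R$ cannot have degree $\le 1$, because the Julia set of a M\"obius transformation is empty or a single point; hence $\deg R \ge 2$ and the usual Fatou--Julia theory is available.

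By Fatou's theorem, repelling periodic points are dense in $J(R)$, so $J(R) = \partial M$ contains a repelling periodic point $z_0$, of some period $p\ge 1$. Set $g = R^{\circ p}$, so that $g(z_0) = z_0$ and $\lambda := g'(z_0)$ has $|\lambda| > 1$; in particular $g'(z_0)\neq 0$, so $g$ is injective on some open disk $U$ centered at $z_0$. Let $V = g(U)$. Then $g|_U \colon U \to V$ is an orientation-preserving conformal homeomorphism, $U$ is connected, and by complete invariance of the Julia set of $R$ (hence of $g = R^{\circ p}$) one has $g(U\cap J(R)) = V\cap J(R)$: the inclusion ``$\subseteq$'' is forward invariance, and ``$\supseteq$'' follows because the unique $g$-preimage in $U$ of a point of $V\cap J(R)$ lies in $J(R)$ by backward invariance. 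Thus $g|_U$ sends $U\cap\partial M$ onto $V\cap\partial M$, i.e.\ it is a local conformal symmetry of $\partial M$.

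Finally, this symmetry is non-trivial: $z_0\in U\cap\partial M$, so $U\cap\partial M\neq\emptyset$, and $g|_U$ is not the identity because $g'(z_0)=\lambda\neq 1$. This contradicts Theorem~\ref{maintheorem}, completing the proof.

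In this deduction there is really no serious obstacle --- all the difficulty is already absorbed into Theorem~\ref{maintheorem}. The only points that need (minor) care are ruling out $\deg R\le 1$ and checking both inclusions in $g(U\cap\partial M)=V\cap\partial M$ from complete invariance of the Julia set; the existence of a repelling periodic point in $\partial M$ is exactly where the hypothesis ``$\partial M$ is a Julia set'' is used.
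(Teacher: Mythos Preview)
Your proof is correct and follows essentially the same approach as the paper: the dynamics of the rational map furnishes a non-trivial local conformal symmetry of its Julia set, contradicting Theorem~\ref{maintheorem}. The only minor difference is that the paper simply takes any non-critical point $p\in J$ and uses $f$ itself as the symmetry between neighborhoods of $p$ and $f(p)$, whereas you invoke the density of repelling periodic points to obtain a symmetry fixing a point; your route is slightly more elaborate but equally valid.
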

\begin{proof}
For the Julia set $J$ of a rational map $f$, and any non-critical point $p\in J$, $f$ gives a local conformal symmetry between $p$ and $f(p)$. But $\partial M$ has no non-trivial local conformal symmetries by Theorem \ref{maintheorem}, so $J \neq \partial M$.
\end{proof}

The proof of the Theorem \ref{maintheorem} is based on the study of the symmetries of the Julia sets.
We consider the semigroup
$$
\Sigma_f := \{ g: g \text{ is a polynomial such that } g\circ f^n = f^n \circ g \text{ for some } n\in\Z_{>0}\}
$$
Inside this semigroup, we define the {\em linear symmetry group} of $f$
$$
\Sigma^*_f := \{\sigma\in \Sigma_f: \sigma \text{ is linear.}\} 
$$
and we say $f$ has {\em trivial linear symmetry group} if $\Sigma^*_f$ is trivial.
It turns out that $f$ has non-trivial linear symmetry group if and only if $f$ is conjugate to $z^sh(z^k)$ for some polynomial $h$ and positive integers $s,k$ with $(s,k) = 1$ and $k>1$ (see Section \ref{sec5}).
We also want to emphasize here that in our definition, an element in $\Sigma_f$ is only required to commute with some iterates of $f$, and may not commute with $f$.

We will show that the dynamics can be recovered in most cases from the local conformal structures of the Julia sets.
More precisely, we will prove

\begin{theorem}\label{SymJulia}
Let $f_1(z)$ and $f_2(z)$ be two monic polynomials of degree $d$ with connected and locally connected Julia set $J_1$ and $J_2$, and trivial linear symmetry groups $\Sigma^*_{f_1}= \Sigma^*_{f_2} = \{id\}$. Let
$$
S: (U_1, U_1\cap J_1) \longrightarrow (U_2, U_2\cap J_2)
$$
be a local conformal symmetry of the Julia sets with $U_1\cap J_1 \neq \emptyset$. Then either
\begin{enumerate}
\item The Julia sets are smooth, in which case $f_1(z), f_2(z)$ are conjugate to either monomial $z^d$ (which is eliminated as we assume the linear symmetry group is trivial) or $\pm$ Chebyshev polynomials $\pm T_d$,
\item The Julia sets are not smooth, in which case there exists two polynomials $P(z)$ and $Q(z)$ such 
that 
$$
S(z) = (f_2^{-n_2}\circ f_2^{m_2}) \circ P\circ (f_1^{-n_1}\circ f_1^{m_1}) (z)
$$ 
for some $n_i$ and $m_i$, and $f_1(z) = Q\circ P(z)$, $f_2(z) = P\circ Q(z)$.
\end{enumerate}
\end{theorem}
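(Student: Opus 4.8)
Since each $J_i$ is connected and locally connected, the Böttcher coordinate $\phi_i\colon\hat\C\setminus K_i\to\hat\C\setminus\overline{\D}$, which conjugates $f_i$ to $w\mapsto w^d$, extends continuously to the boundary (Carathéodory), giving a landing map $\gamma_i\colon\partial\D\to J_i$. After shrinking $U_1$ around a point $z_0\in U_1\cap J_1$ I may assume $z_0$ is not precritical, that $U_1\setminus K_1$ is connected, and that $S$ carries $U_1\setminus K_1$ into $U_2\setminus K_2$ (the alternative, in which an exterior piece is carried into a bounded Fatou component, is treated identically with $\phi_i$ replaced by the appropriate Fatou/linearizing coordinate). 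Then $\psi:=\phi_2\circ S\circ\phi_1^{-1}$ is conformal on an open subset of $\hat\C\setminus\overline{\D}$ whose closure meets $\partial\D$ along an arc $\alpha$, it has continuous boundary values realizing a homeomorphism $h\colon\alpha\to\alpha'$ between arcs of $\partial\D$, and by Schwarz reflection $\psi$ extends to a conformal map across $\alpha$. Because a conformal map carries harmonic measure to harmonic measure and $\gamma_i$ intertwines harmonic measure on $J_i$ with Lebesgue measure on $\partial\D$, the circle map $h$ preserves the Lebesgue class; with analyticity this makes $h$ a real-analytic diffeomorphism between arcs with nonvanishing derivative. The target to aim for is now transparent: the polynomials $P$ occurring in a decomposition $f_1=Q\circ P$, $f_2=P\circ Q$ are exactly the ones that, in Böttcher coordinates, become monomials $w\mapsto c\,w^e$ with $e=\deg P\mid d$ and $c^{d-1}=1$; so the whole point is to show that $\psi$, after renormalization by the doubling maps on both sides, is such a monomial.

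\textbf{The smooth case.} Suppose $J_1$ (equivalently, via $S$, also $J_2$) is a smooth submanifold near some point. Then, since $f_1$ is a local biholomorphism off its finitely many critical points, smoothness propagates along the dynamics: $J_1$ is globally a real-analytic curve or arc, and $f_1|_{J_1}$ is a real-analytic degree-$d$ self-map — a covering of a circle, or a folding map of an arc whose critical points map to its endpoints. The classification of polynomials with smooth Julia set then forces $f_1$ to be affinely conjugate to $z^d$ or to $\pm T_d$, and the monomial is excluded because $\Sigma^*_{f_1}=\{\mathrm{id}\}$; likewise for $f_2$. This is conclusion (1), so from now on $J_1,J_2$ are not smooth.

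\textbf{The non-smooth case: dynamical renormalization.} A non-smooth locally connected Julia set has ``singular points'' — branch points, and points where $J_i$ is not locally a smooth arc — and these fall into finitely many combinatorial types, governed by the critical orbits, recorded by the number and cyclic order of the external rays landing at them. A conformal map must send a singular point of $J_1$ to a singular point of $J_2$ realizing the identical local model; this is the rigidity that will let $S$ ``see'' the dynamics. Since $w\mapsto w^d$ expands arcs, for $m_1$ large $f_1^{m_1}(U_1\cap J_1)=J_1$; precomposing $S$ with a suitable branch of $f_1^{-n_1}\circ f_1^{m_1}$ spreads the source to a full neighborhood of $J_1$, and postcomposing with $f_2^{-n_2}\circ f_2^{m_2}$ adjusts the target, producing a straightened map $\widehat S$ near $J_1$ that — because it must respect the matched local models — intertwines the dynamics in the appropriate sense. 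Using the functional equation together with local connectedness to control the boundary behaviour, $\widehat S$ then analytically continues to a global polynomial $P$ of some degree $e\mid d$ (in Böttcher coordinates the monomial $w\mapsto c\,w^e$ above, with $c=1$ forced by triviality of $\Sigma^*_{f_1},\Sigma^*_{f_2}$), which moreover is a left factor of $f_1$, say $f_1=Q\circ P$; the intertwining relation $P\circ f_1=f_2\circ P$ then reads $f_2=P\circ Q$. Unwinding the branches of $f_1^{-n_1}\circ f_1^{m_1}$ and $f_2^{-n_2}\circ f_2^{m_2}$ absorbed in the straightening yields exactly the stated formula $S=(f_2^{-n_2}\circ f_2^{m_2})\circ P\circ(f_1^{-n_1}\circ f_1^{m_1})$.

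\textbf{The main obstacle.} The difficulty is entirely in the straightening step. A priori $S$ has no relation to the dynamics, so one must manufacture dynamical rigidity out of a purely conformal hypothesis; and the data a conformal map is forced to preserve — local models at singular points, cyclic order of landing rays, the harmonic-measure class — pins $S$ down only up to the ``$Q$-part'' of the dynamics, the twist between $Q\circ P$ and $P\circ Q$ being precisely the ambiguity the local conformal structure cannot detect. Two points are delicate: first, one must identify $P$ on the nose — as an honest left factor of $f_1$, not merely as a semiconjugacy — which means controlling the analytic continuation of $\psi$ across the singular points of $J_i$, where Schwarz reflection is unavailable and one must instead use local connectedness and the finiteness of singular orbits; second, one must verify that after renormalization the two forced ``holonomy'' adjustments on the source and target sides are compatible, so that $\widehat S$ genuinely intertwines $f_1$ and $f_2$ rather than only doing so up to bounded distortion. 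Triviality of the linear symmetry groups enters at the end to remove the rotational constant in the Böttcher monomial and the remaining discrete ambiguities. I expect this propagation-and-identification step to be the technical heart of the proof.
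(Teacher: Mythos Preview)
Your setup via B\"ottcher coordinates and Schwarz reflection is correct, and you correctly identify the target: after pre- and post-composition with iterates of $m_d$, the boundary circle map $h$ should become affine $t\mapsto et+b$ with integer slope $e\mid d$. But the proposal has a genuine gap precisely at this point. Schwarz reflection only tells you $h$ is a real-analytic diffeomorphism of arcs (the harmonic-measure remark adds nothing and is in fact dubious --- $S$ is a local map with no a priori relation to the harmonic measure of $\C\setminus K_i$ from $\infty$). The passage from ``real-analytic'' to ``affine with rational slope'' is the technical core of the argument, and it does not come from matching local singular-point types as you suggest. The paper's mechanism is a renormalization: fix an $m_d$-periodic point $x$ of period $p$, set $s_n=m_d^{np}\circ s\circ m_d^{-np}$, pass to a subsequential limit $s_\infty(t)=s'(x)\,t+B$, and then show the sequence \emph{stabilizes} at $s_\infty$ using discreteness of the symmetry group near the identity --- detected either by a repelling periodic point with non-real multiplier (when $J$ is a Jordan curve) or by the gap structure of Thurston's invariant lamination (otherwise). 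Your ``finitely many singular types'' idea is in the right spirit but does not supply this stabilization, and without affineness you cannot even speak of the degree of the extended map.

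You also misplace the role of $\Sigma^*_{f_i}=\{\mathrm{id}\}$. It is not merely used at the end to kill a rotational constant; it is the key to a commutator argument that forces the slope of $s$ to be an integer dividing $d$. Once $s$ is known to be affine with rational slope, one forms $\phi=m_d^{-q}\circ s^{-1}\circ m_d^{q}\circ s$, which has slope $1$; by the extension lemma this lifts to a linear polynomial commuting with an iterate of $f_1$, hence the identity by hypothesis --- and this is what matches periods on the two sides and eliminates the denominator of the slope. Without this step your straightened map $\widehat S$ need not have degree dividing $d$, and the intertwining relation $P\circ f_1=f_2\circ P$ alone does not yield the factorization $f_1=Q\circ P$, $f_2=P\circ Q$.
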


Note that the condition $\Sigma^*_{f_1}= \Sigma^*_{f_2} = \{id\}$ actually eliminates the case of Chebyshev polynomials when $d$ is odd.

In the case $f_1 = f_2$, and $x\in J$ is a periodic point of period, the set of local conformal symmetries fixing $x$ form a group under composition, which we call the {\em local conformal symmetry group at $x$}.
A similar argument also gives a classification of the symmetry group of the local symmetry at a repelling periodic point:

\begin{theorem}\label{gen}
Let $f(z)$ be a monic polynomial of degree $d$ with connected, locally connected non-smooth Julia set, and $x \in J$ be a repelling periodic point of period $p$,
then the local symmetry of the Julia set at $x$ is given by
$$
\Sym := \{f^{-pn} \circ g : n\in\N, g\in \Sigma_f, g(x) = x\}
$$
where the inverse branch is chosen so that $f^{-pn}(x) = x$.
\end{theorem}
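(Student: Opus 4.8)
The plan is to show that $\Sym$ coincides with the group $G_{x}$ of all local conformal symmetries of $J$ that fix $x$, by proving the two inclusions. First I would check $\Sym\subseteq G_{x}$: since $x$ is a repelling periodic point of period $p$, the iterate $f^{p}$ is a local biholomorphism near $x$ and carries $J$ onto $J$ (because $f(J)=J$), so the branch of $f^{-p}$ fixing $x$, and hence each $f^{-pn}$, is a local conformal symmetry at $x$; and for $g\in\Sigma_{f}$ with $g(x)=x$, choosing $N$ with $g\circ f^{N}=f^{N}\circ g$ we also have $g\circ f^{Np}=f^{Np}\circ g$, and passing to the Koenigs linearizer of $f^{Np}$ at $x$ (whose multiplier has modulus $>1$) this commutation forces $g$ to become a scalar multiplication there, so $g'(x)\neq 0$, $g$ is a local biholomorphism at $x$, and $g$ preserves $J=J(f^{N})$ because it commutes with an iterate of $f$. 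Composing these, and noting that $\Sigma_{f}$ is closed under composition, every $f^{-pn}\circ g$ lies in $G_{x}$; moreover the same linearizer simultaneously linearizes $f^{p}$, so all of $\Sym$ becomes scalar multiplications in it, and hence $\Sym$ is an abelian subgroup of $G_{x}$ that injects into $\C^{*}$ via $H\mapsto H'(x)$.

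The substance is the reverse inclusion $G_{x}\subseteq\Sym$. Given $H\in G_{x}$, I would follow the proof of Theorem~\ref{SymJulia} applied with $f_{1}=f_{2}=f$ and $S=H$. Since the Julia set is non-smooth, that argument (whose hypothesis $\Sigma^{*}_{f}=\{id\}$ serves only to exclude a linear factor in the conclusion) yields polynomials $P,Q$ and integers $n_{i},m_{i}$ with
$$H=(f^{-n_{2}}\circ f^{m_{2}})\circ P\circ(f^{-n_{1}}\circ f^{m_{1}}),\qquad f=Q\circ P=P\circ Q,$$
with the inverse branches determined near $x$. When $\Sigma^{*}_{f}\neq\{id\}$ (equivalently $f$ is conjugate to $z^{s}h(z^{k})$), one runs the same scheme but now allows $P$ to be linear, so the linear symmetries enter through $P$; this is precisely why $\Sigma_{f}$, rather than just the iterates of $f$ together with a dynamically defined factorization, appears in the statement.

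From $f=Q\circ P=P\circ Q$ one reads off $P\circ f=P\circ(Q\circ P)=(P\circ Q)\circ P=f\circ P$, so $P$ commutes with $f$ itself; in particular $P\in\Sigma_{f}$, and $P$ slides past every iterate of $f$ and every matching inverse branch. Pushing $P$ to the right then collapses all the $f^{\pm}$-factors, giving $H=f^{k}\circ P$ for a single integer $k$ (when $k<0$ this denotes the inverse branch pinned down by $H(x)=x$). A short computation with $H(x)=x$ shows $P(x)$ lies in the $f$-orbit of $x$, and then multiplying $P$ on the left by a suitable power $f^{j}$ so that the leftover negative power of $f$ becomes a multiple of $p$ (and replacing $H$ by $f^{-p}\circ(f^{p}\circ H)$ if $k\geq 0$) puts $H$ in the form $f^{-pn}\circ g$ with $g\in\Sigma_{f}$, and one checks $g(x)=x$ while $f^{-pn}$ is the branch fixing $x$. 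Combined with the first inclusion this gives $G_{x}=\Sym$.

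The main obstacle will be the second paragraph: making the proof of Theorem~\ref{SymJulia} go through without assuming $\Sigma^{*}_{f}=\{id\}$, i.e.\ verifying that its machinery — local connectedness, the expansion/renormalization picture at a repelling periodic point, and the Ritt-type factorization of polynomials — still produces the factorization $f=Q\circ P=P\circ Q$ with $P$ now possibly linear, and in particular that the non-smoothness of $J$ genuinely rules out the transcendental symmetries carried by the smooth ($\pm$Chebyshev, monomial) Julia sets. One could instead try to argue directly that every element of $G_{x}$ is a scalar multiplication in the Koenigs coordinate of a suitable iterate — a rigidity statement for the self-similar set $\phi(J)$ — but establishing that rigidity from scratch appears no easier than invoking the structure already developed for Theorem~\ref{SymJulia}; the remaining bookkeeping with branches and the period $p$ is routine.
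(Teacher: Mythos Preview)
Your plan for the hard inclusion $G_x\subseteq\Sym$ has a genuine gap precisely where you flag it, and your suggested patch does not close it. In the proof of Theorem~\ref{SymJulia} the hypothesis $\Sigma^*_f=\{id\}$ enters through Lemma~\ref{lem:int}: one forms the commutator $[m_d^q,s]$, observes that its circle derivative is $1$, extends it via Lemma~\ref{ext} to a linear map $\Phi$ commuting with $f^p$, and then invokes $\Sigma^*_f=\{id\}$ to force $\Phi=\mathrm{id}$, from which one deduces that periods are preserved and hence that $v=1$. When $\Sigma^*_f$ is nontrivial the commutator can land on a genuine rotation in $\Sigma^*_f$, and you no longer know the normalized circle derivative is an integer; but an integer derivative is the hypothesis of Lemma~\ref{ext}, so you never reach the stage at which a polynomial $P$ is produced, let alone one that is ``allowed to be linear''. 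The example $f_i(z)=z^sR(z^{n_i})^{n_{3-i}}$, $S(z)=z^{n_1/n_2}$ recorded just after the statement of Theorem~\ref{gen} shows that the conclusion of Theorem~\ref{SymJulia} genuinely fails without the triviality assumption, so one cannot simply rerun that argument.

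The paper's proof takes a different route and exploits exactly the feature that distinguishes the present situation from the two--polynomial one: the local symmetries at $x$ form a \emph{group}. After post-composing with a power of $f^p$ so that the circle derivative is $u/v$ with $(v,d)=1$, the circle derivatives of elements of $G_x$ form a multiplicative subgroup of $\Q_{>0}$ containing both $u/v$ and $d^p$ (the latter coming from $f^p\in G_x$). If $v\neq 1$ these two rationals are multiplicatively independent, so this subgroup is dense in $\R_{>0}$; taking limits then manufactures a local symmetry whose circle derivative is irrational, contradicting Lemma~\ref{RatDer}. Hence $v=1$, the derivative is a positive integer, and Lemma~\ref{ext} applies directly (its period hypothesis is automatic since $S$ fixes $x$), extending the post-composed symmetry to a polynomial $g$ with $g\circ f^p=f^p\circ g$ and $g(x)=x$, i.e.\ $g\in\Sigma_f$. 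This bypasses Lemma~\ref{lem:int} altogether and needs no assumption on $\Sigma^*_f$; the group structure is what replaces it.
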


Note that in Theorem \ref{gen}, we don't need to assume $\Sigma^*_f = \{id\}$.
The group structure allows us to show that up to post composing with the dynamics, the local symmetries all come from global polynomials.
This is not the case in general for two different polynomials if we do not have the trivial linear symmetry assumption. Consider
$$
f_1(z) = z^sR(z^{n_1})^{n_2}, f_2(z) = z^sR(z^{n_2})^{n_1}
$$ 
for some $n_1, n_2, s\in \N$ and some polynomial $R$, then $S(z) = z^{\frac{n_1}{n_2}}$ gives local symmetry between the two Julia sets, but it cannot be promoted to a global one using dynamics if $(s,n_1n_2) = 1$.

The semigroup $\Sigma_f$ or its variant has been studied intensively, and using decomposition theory of polynomials (see \cite{R22}, \cite{R23} and \cite{GTZ08}), one can classify all elements in $\Sigma_f$. 

For quadratic family $f_c(z) = z^2+c$, Theorem \ref{SymJulia} and Theorem \ref{gen} translates into the following:
\begin{namedtheorem}[Theorem \ref{SymJulia}']
Let $f_1(z) = z^2+c_1$ and $f_2(z) = z^2+c_2$ be two quadratic polynomial with connected, locally connected Julia set, which are not $z^2$ or $z^2-2$.
Let
$$
S: (U_1, U_1\cap J_1) \longrightarrow (U_2, U_2\cap J_2)
$$
be a local conformal symmetry of the Julia sets with $U_1\cap J_1 \neq \emptyset$, then $f_1=f_2$.
\end{namedtheorem}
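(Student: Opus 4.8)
The plan is to deduce this statement directly from Theorem~\ref{SymJulia} with $d=2$, $f_i(z)=z^2+c_i$. The first step is to check its hypotheses. Connectedness and local connectivity of $J_1,J_2$ are assumed, so what remains is the vanishing of the linear symmetry groups $\Sigma^*_{f_1},\Sigma^*_{f_2}$. By the criterion recorded in Section~\ref{sec5}, $f$ has non-trivial linear symmetry group if and only if it is conjugate to $z^sh(z^k)$ with $(s,k)=1$ and $k>1$; the degree relation $s+k\deg h=2$ together with $k\ge 2$ forces $h$ to be constant and $s=2$, so $f$ is conjugate to $z^2$, and since every affine conjugacy class of quadratics has a unique monic centered representative this happens only for $c=0$. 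As $f_1,f_2\neq z^2$, both linear symmetry groups are trivial and Theorem~\ref{SymJulia} is applicable.

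Next I would run the dichotomy of Theorem~\ref{SymJulia}. In case (1) the Julia sets are smooth and each $f_i$ is affine conjugate to $z^2$ or to $\pm T_2$. The monic Chebyshev polynomial of degree $2$ is $z^2-2$, and a direct conjugacy check shows $-T_2$ is also affine conjugate to $z^2-2$; by uniqueness of the monic centered representative this would put $f_1,f_2\in\{z^2,z^2-2\}$, contrary to hypothesis. Hence case (1) cannot occur, and we are in case (2).

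In case (2) Theorem~\ref{SymJulia} produces polynomials $P,Q$ with $f_1=Q\circ P$ and $f_2=P\circ Q$. Neither is constant, and $\deg P\cdot\deg Q=\deg f_1=2$, so exactly one of them is an invertible affine map. If $Q$ is affine, then $P=Q^{-1}\circ f_1$ and hence $f_2=P\circ Q=Q^{-1}\circ f_1\circ Q$; if $P$ is affine, then $f_2=P\circ Q=P\circ f_1\circ P^{-1}$. In either case $f_1$ and $f_2$ are affine conjugate, hence equal since both are monic and centered, and the theorem follows.

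There is no real obstacle here beyond two pieces of elementary bookkeeping: confirming that no quadratic other than $z^2$ meets the $z^sh(z^k)$ criterion, so the linear symmetry hypothesis of Theorem~\ref{SymJulia} is automatic for $f_1,f_2\neq z^2$; and confirming that both $\pm T_2$ collapse to $z^2-2$, so the entire smooth case is absorbed by the two excluded polynomials. All the genuine content is carried by Theorem~\ref{SymJulia}.
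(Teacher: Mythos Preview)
Your argument is correct and matches the paper's intended derivation: the paper presents Theorem~\ref{SymJulia}$'$ as a direct specialization of Theorem~\ref{SymJulia} to the quadratic family (noting elsewhere that $\Sigma^*_{z^2+c}$ is trivial for $c\neq 0$), and you have filled in precisely the bookkeeping that this specialization requires --- verifying the triviality of the linear symmetry group via the $z^s h(z^k)$ criterion, ruling out the smooth case by identifying both $\pm T_2$ with $z^2-2$, and using the prime degree $d=2$ to force one of $P,Q$ to be affine so that $f_1,f_2$ are affine conjugate and hence equal.
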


\begin{namedtheorem}[Theorem \ref{gen}']
Let $f(z) = z^2+c$ be a quadratic polynomial with connected, locally connected Julia set, which is not $z^2$ or $z^2-2$,
then the local symmetry group at a repelling periodic point $x$ of period $p$ is generated by the $f^p$.
\end{namedtheorem}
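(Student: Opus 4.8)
\emph{Proof proposal.} The plan is to read Theorem~\ref{gen}$'$ off Theorem~\ref{gen}, so the first task is to verify the hypotheses of the latter for $f(z)=z^{2}+c$ with $f$ neither $z^{2}$ nor $z^{2}-2$. Connectedness and local connectedness are assumed, so the only point is that $J(f)$ is not smooth; here I would use the classical dichotomy, recorded as case (1) of Theorem~\ref{SymJulia}, that a polynomial with smooth Julia set is affinely conjugate to $z^{d}$ or to $\pm T_{d}$. In degree two, after normalizing to monic centered form, $z^{d}$ becomes $z^{2}$ and $T_{2}$, $-T_{2}$ both become $z^{2}-2$ (e.g.\ $-T_{2}(z)=1-2z^{2}$ is conjugated to $z^{2}-2$ by $z\mapsto-\tfrac12 z$). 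Hence $J(f)$ is smooth only for $f\in\{z^{2},z^{2}-2\}$, which is excluded, and Theorem~\ref{gen} applies: at a repelling periodic point $x$ of exact period $p$,
$$
\Sym=\{\,f^{-pn}\circ g\ :\ n\in\N,\ g\in\Sigma_{f},\ g(x)=x\,\},
$$
with $f^{-pn}$ the branch of the inverse of $f^{pn}$ fixing $x$ (which exists because $x$ is a repelling fixed point of $f^{p}$).

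The second task is to determine $\Sigma_{f}$ for $f\notin\{z^{2},z^{2}-2\}$; the claim is that $\Sigma_{f}=\{f^{\circ m}:m\ge 0\}$. First, $\Sigma^{*}_{f}=\{\mathrm{id}\}$: by the characterization recalled in Section~\ref{sec5}, a quadratic $z^{2}+c$ has a non-trivial linear symmetry group only when it is affinely conjugate to some $z^{s}h(z^{k})$ with $(s,k)=1$ and $k>1$, and a short case check (using that a quadratic has one finite critical point) shows this is impossible unless $c=0$. Now let $g\in\Sigma_{f}$ be non-constant, say $g\circ f^{n}=f^{n}\circ g$. If $\deg g=1$ then $g\in\Sigma^{*}_{f}=\{\mathrm{id}\}$. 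If $\deg g\ge 2$, then $g$ and $f^{n}$ are commuting polynomials of degree $\ge 2$, hence share the Julia set $J(f)$; as $J(f)$ is neither a circle nor a segment, the Ritt--Julia classification of commuting polynomials (see \cite{R23}) places $g$ and $f^{n}$ outside the monomial and Chebyshev families, so they have a common iterate $g^{\circ j}=f^{\circ nk}$. Comparing degrees gives $\deg g=2^{t}$ with $tj=nk$, so $g^{\circ j}=f^{\circ nk}=(f^{\circ t})^{\circ j}$, and uniqueness of compositional roots for iterates of a polynomial not conjugate to $z^{d}$ or $\pm T_{d}$ — a consequence of Ritt's decomposition theorems, cf.\ \cite{R22}, \cite{R23}, \cite{GTZ08} — forces $g=f^{\circ t}$. (Alternatively, in the B\"ottcher coordinate of $f$ at $\infty$ every $g\in\Sigma_{f}$ becomes $w\mapsto\alpha w^{2^{t}}$ commuting with $w\mapsto w^{2^{n}}$, and the factor $\alpha$ encodes a linear symmetry of $f$, which is trivial.) This yields $\Sigma_{f}=\{f^{\circ m}:m\ge 0\}$.

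The last task is bookkeeping at $x$. With $\Sigma_{f}=\{f^{\circ m}\}$, the requirement $g(x)=x$ for $g=f^{\circ m}$ and $x$ of exact period $p$ becomes $p\mid m$, so
$$
\Sym=\{\,f^{-pn}\circ f^{pk}\ :\ n\in\N,\ k\ge 0\,\}.
$$
Read at the level of germs at $x$: since $f^{-pn}$ is the branch of $(f^{p})^{-n}$ fixing $x$, the germ of $f^{-pn}\circ f^{pk}$ is that of $f^{p(k-n)}$ when $k\ge n$ and that of $(f^{p})^{-(n-k)}$ when $n>k$; as $n,k$ vary the exponent $k-n$ ranges over all of $\Z$. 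Hence $\Sym$ is exactly the cyclic group of germs at $x$ generated by $f^{p}$, which is the claim of Theorem~\ref{gen}$'$.

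The substance of the argument is Theorem~\ref{gen} itself, which does the geometry; granting it, the only non-formal ingredient left is the description of $\Sigma_{f}$, and within that the uniqueness of compositional roots of $f^{\circ n}$. In degree two this is clean: $f$ is indecomposable, so every decomposition of $f^{\circ n}$ into indecomposables is $f\circ\cdots\circ f$ up to the affine moves absorbed by the monic centered normalization, and the only exceptional iterates to rule out are conjugates of $z^{2^{n}}$ and $T_{2^{n}}$, already removed. I therefore expect no serious obstacle beyond importing Theorem~\ref{gen}.
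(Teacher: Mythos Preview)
Your proposal is correct and follows essentially the same route as the paper: invoke Theorem~\ref{gen} to reduce the question to computing $\Sigma_f$, then use Ritt's theory together with the triviality of $\Sigma^*_f$ for $c\neq 0$ and the indecomposability of a degree-$2$ polynomial to conclude that $\Sigma_f=\{f^{\circ m}:m\ge 0\}$, after which the bookkeeping at $x$ is immediate. The only cosmetic differences are that you explicitly verify the non-smoothness hypothesis (the paper leaves this implicit via Remark~\ref{SJ}) and that you phrase the Ritt step as ``common iterate plus uniqueness of compositional roots'' rather than citing the structural form $g=\zeta h^{a}$, $f^{n}=h^{b}$ directly; both formulations are equivalent once $\Sigma^*_f$ is trivial.
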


Theorem \ref{SymJulia} allows us to prove the following theorem which will imply Theorem \ref{maintheorem} immediately:

\begin{theorem}\label{lb}
The limit models of different branch points of the Mandelbrot set are not similar.
\end{theorem}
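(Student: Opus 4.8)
The plan is to reduce Theorem~\ref{lb} to the quadratic rigidity statement Theorem~\ref{SymJulia}$'$, using Tan Lei's asymptotic self-similarity of $M$ at Misiurewicz points. Recall that the branch points of $\partial M$ are Misiurewicz parameters: for such $c_0$ the critical orbit of $f_{c_0}(z)=z^2+c_0$ is strictly preperiodic, $J_{c_0}$ is connected and locally connected (a dendrite), and $c_0\notin\{0,-2\}$, since $0$ lies in the interior of $M$ and $-2$ is an end point of $\partial M$, not a branch point. By a theorem of Tan Lei, $M$ is asymptotically self-similar at $c_0$, and its limit model $\mathcal{M}_{c_0}$ coincides, up to an orientation-preserving similarity $z\mapsto az+b$, with the limit model of the Julia set $J_{c_0}$ at the critical value $c_0$. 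Hence it is enough to prove: if $c_1,c_2$ are branch points and the limit models of $J_{c_1}$ at $c_1$ and of $J_{c_2}$ at $c_2$ are similar, then $c_1=c_2$.

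The technical heart is to identify each such limit model with a conformal germ of the Julia set. Fix $c=c_i$ and let $w$ be the repelling periodic point (of period $p$) on which the critical orbit lands, say $w=f_c^{m}(0)$, and put $l=m-1\geq 0$. Since the finite orbit of the critical value $c=f_c(0)$ before it reaches $w$ avoids the critical point $0$ (the critical orbit being strictly preperiodic), the iterate $f_c^{\,l}$ is a biholomorphism from a neighbourhood of $c$ onto a neighbourhood of $w$, and I would take a Koenigs linearization $\kappa$ conjugating $f_c^{\,p}$ near $w$ to $\zeta\mapsto\rho\zeta$, where $\rho=(f_c^{\,p})'(w)$, $|\rho|>1$, $\kappa(w)=0$. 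Backward invariance of $J_c$ under the branch of $f_c^{-p}$ fixing $w$ makes $\kappa(J_c)$ invariant near $0$ under $\zeta\mapsto\rho^{-1}\zeta$, so rescaling $\kappa(J_c)$ by $\rho^{n}$ produces, in the Hausdorff limit on compacta, a genuinely $\rho$-self-similar closed set $\Lambda_c$, independent of $n$, whose germ at $0$ equals the germ of $\kappa(J_c)$ at $0$, hence (via $\kappa^{-1}$, then $f_c^{\,l}$) is conformally the germ of $J_c$ at $c$; and $\Lambda_c$ is the limit model of $J_c$ at $c$ up to similarity. Given now an orientation-preserving similarity $A(z)=az+b$ with $A(\Lambda_{c_1})=\Lambda_{c_2}$, I would write $\kappa_i,\rho_i,w_i$ for the data attached to $c_i$, choose $N$ large enough that $\rho_2^{-N}b$ lies in the ``core'' $\kappa_2(J_{c_2})$ near $0$, set $z_2=\kappa_2^{-1}(\rho_2^{-N}b)\in J_{c_2}$, and consider
\[
S(z)=\kappa_2^{-1}\!\bigl(\rho_2^{-N}\,A(\kappa_1(z))\bigr).
\]
Because $\zeta\mapsto\rho_2^{-N}\zeta$ preserves $\Lambda_{c_2}$ and every factor is conformal and orientation-preserving, on a small connected neighbourhood $U_1$ of $w_1$ this $S$ is a conformal homeomorphism onto a neighbourhood $U_2$ of $z_2$ carrying $U_1\cap J_{c_1}$ onto $U_2\cap J_{c_2}$, with $w_1\in U_1\cap J_{c_1}\neq\emptyset$ after shrinking so that the two germs match exactly.

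Then $S$ is a local conformal symmetry of the Julia sets with $U_1\cap J_{c_1}\neq\emptyset$, and $f_{c_1},f_{c_2}$ are quadratic polynomials $z^2+c_i$ with connected, locally connected Julia sets, neither equal to $z^2$ nor to $z^2-2$, so Theorem~\ref{SymJulia}$'$ forces $f_{c_1}=f_{c_2}$, i.e.\ $c_1=c_2$, contradicting $c_1\neq c_2$ and proving the theorem. (An orientation-reversing similarity of the limit models would instead produce an anti-conformal local symmetry; since $J_{\bar c}=\overline{J_c}$, running the argument with the pair $c_1,\bar c_2$ reduces that case to the previous one, although it is not needed for Theorem~\ref{maintheorem}.) I expect the main obstacle to be the middle step: passing rigorously between the \emph{abstract} limit model of $J_c$ at $c$ --- a Hausdorff limit of rescaled copies of the Julia set --- and a genuine conformal germ of $J_c$, and tracking where the marked point $0$ of the limit model is sent by $A$. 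The $\rho$-self-similarity of $\Lambda_c$ is exactly what makes this ``un-rescaling'' possible, but one must be careful with the choice of fundamental domains around $w$ and keep in mind that a priori $\rho_1$ and $\rho_2$ are unrelated.
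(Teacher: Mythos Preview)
Your argument is correct and follows essentially the same route as the paper's proof: Branch Theorem to identify branch points as Misiurewicz parameters, Tan Lei's Theorem to relate the limit model of $M$ at $c$ to the linearized Julia set, then the K\"oenigs linearization to convert a similarity of limit models into a local conformal symmetry of the two Julia sets, and finally Theorem~\ref{SymJulia}$'$ (the quadratic case of Theorem~\ref{SymJulia}) to force $c_1=c_2$. The paper compresses the middle step into the single phrase ``by composing with the two linearization maps, the similarity between the two limit models gives a local conformal symmetry between the two Julia sets''; your explicit construction $S=\kappa_2^{-1}\circ(\rho_2^{-N}A)\circ\kappa_1$ and the use of $\rho_2$-self-similarity to land in the domain of $\kappa_2^{-1}$ is exactly what underlies that sentence, so there is no genuine difference in strategy.
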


\begin{proof}[Proof of Theorem \ref{maintheorem} assuming Theorem \ref{lb}]
To show that $\partial M$ has no local conformal symmetries, we will argue by contradiction and
assume that we have a non-trivial map $H: U \longrightarrow V$ preserving $\partial M$.
By restricting to smaller sets, we may assume that $U\cap V = \emptyset$.
Note that branch points are dense in $\partial M$, let $p$ be a branch point in $U$,
then $H(p)$ is also a branch point as $H$ is a symmetry.
Since $H$ is conformal, in particular, it is $C^1$ with $\C$-linear derivative at $p$.
This means the limit models of the branch points $p$ and $H(p)$ are similar by scaling $H'(p)$.
This gives a contradiction to Theorem \ref{lb}.
\end{proof}

\subsection{Notes and References} \label{contsym}
The study of the local conformal symmetries of the Mandelbrot set is motivated by Ghioca, Krieger and Nguyen's investigation of Dynamical Andr\'e-Oort conjecture \cite{GKN14}.
In their proof of the Dynamical Andr\'e-Oort conjecture, one of the key steps is to show that the Multibrot set is not the filled Julia set of any polynomial. 
This follows immediately from our Corollary \ref{cor1} and its generalizations to Multibrot sets.

Theorem \ref{maintheorem} is, however, no longer true if we only require the local symmetries to be homeomorphisms.

Branner and Fagella \cite{BF99} \cite{BF01} constructed a homeomorphism between the neighborhoods of two limbs (omitting the attaching point on the boundary of the main cardioid) of the Mandelbrot set with equal denominators. The homeomorphism is holomorphic on the interior and quasiconformal outside the limbs.
The construction uses quasiconformal surgery, and the homeomorphism is not directly compatible with the dynamics, so in general, the periods of hyperbolic components are changed.
It is conjectured that the homeomorphism is in fact quasi-conformal. 

There are also many homeomorphisms between different parts of the Mandelbrot sets. Douady and Hubbard \cite{DH85_p} introduced the notion of polynomial-like maps and used it to identify the homeomorphic copies ({\em small Mandelbrot sets}) of the Mandelbrot set inside the Mandelbrot set. Lyubich \cite{Lu99} showed that the homeomorphisms are quasi-conformal when the small Mandelbrot sets are {\em primitive}. The homeomorphisms are not quasi-conformal for {\em satellite} copies. Recently, Dudko and Schleicher \cite{DS12} constructed a homeomorphism between two limbs with equal denominators preserving the periods of hyperbolic components.

It is worth noting that the two constructions mentioned above do not extend to a continuous map in any neighborhood of the domain, so they don't give topologically similar points as in Branner and Fagella's construction.

\subsection{A Summary of Techniques} \label{pre}
We will need following ingredients to prove the Theorems stated in Section \ref{sec1}.
\begin{enumerate}
\item Tan Lei's Theorem (see \cite{T90}):

The Mandelbrot set $M$ is asymptotically self-similar about a Misiurewicz point $c$ with limit model given by the linearization of the Julia set of $f_c$ at the critical value $c$.

\item Branch Theorem (see \cite{Sch04} or Expos\'e XXII in \cite{DH85}):

The branch points in the Mandelbrot set are Misiurewicz points.

\item Invariant Laminations associated to polynomials (see \cite{Th87} \cite{Th09}).

\item Ritt's theory on polynomial decompositions. (see \cite{R22} \cite{E41} \cite{GTZ08}).

\end{enumerate}

We will spend the next four sections discussing these four theories. Readers who are familiar with these theories can jump to Section \ref{pf}.

\section{Tan Lei's Theorem and Asymptotic Self-Similarity}\label{sec3}

\begin{defn}[Hausdorff Distance]
Denote $\F$ the set of non-empty compact subsets of $\C$. For $A, B \in \F$, we define the semi-distance from $A$ to $B$ to be
$$
\delta(A,B) := \sup_{x\in A} d(x, B)
$$
and the Hausdorff distance of $A, B$ to be
$$
d(A,B) := \max\{\delta(A,B), \delta(B,A)\}
$$
\end{defn}

$(\F,d)$ is a complete metric space. Following \cite{T90}, we also define the {\em truncation} for a set $A \in \F$
$$
A_r:= (A\cap D_r)\cup \partial D_r
$$
where $D_r$ is the disc centered at $0$ of radius $r$.

\begin{defn}
A closed set $0\in A \in \F$ is self similar with scale $\alpha$ about $0$ if there is $r>0$ such that
$$
(\alpha A)_r = A_r
$$
A closed set $0\in B \in \F$ is asymptotically self similar with scale $\alpha$ about $0$ with a limit model $A$ if there is $r>0$ such that
$$
d((\alpha^n B)_r, A_r) \xrightarrow{n\to\infty} 0
$$

Similarly, we call a closed set $A$ is (asymptotically) self similar about $c\in A$ if the translation of the set $A-c$ is (asymptotically) self similar about $0$.
\end{defn}

If two (asymptotically) self similar sets are related via a $C^1$ diffeomorphism with $\C$-linear derivative, then their limit models are related in the obvious way:
\begin{lem}\label{limitmodelsimilar}
Let $U, V$ be two neighborhoods of $0 \in \C$, and $F:U\longrightarrow V$ is a $C^1$ diffeomorphism with $F(0) = 0$ and with derivative $T_0F = T$ $\C$-linear. Suppose that $B\subset U$ is a closed set such that $F(B)$ is asymptotically $\rho$-self-similar about $0$ with limit model $A$, then $B$ is asymptotically $\rho$-self-similar about $0$ with limit model $T^{-1}(A)$.
\end{lem}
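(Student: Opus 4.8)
The plan is to reduce to the near-identity case and then run a short $C^1$ estimate. The key simplification is that a $\C$-linear isomorphism $T\colon\C\to\C$ is just multiplication by the scalar $a:=T(1)\neq 0$, so $T^{-1}$ is multiplication by $a^{-1}$. Consequently $T^{-1}$ commutes with the scaling $w\mapsto\rho^n w$, it is a similarity (so $d(T^{-1}X,T^{-1}Y)=|a|^{-1}d(X,Y)$ for the Hausdorff distance), and it transforms truncations cleanly: $(T^{-1}X)_r=T^{-1}(X_{|a|r})$ for every $r>0$. Thus the ``linear part'' of the statement costs nothing, and the real content is a comparison of $\rho^n B$ with $\rho^n G(B)$, where $G:=T^{-1}\circ F$ is a $C^1$ diffeomorphism with $G(0)=0$ and $T_0G=\mathrm{id}$, so that $G(B)=T^{-1}(F(B))$. (We may assume $|\rho|>1$, as in all the situations where the lemma gets applied; the remaining cases are degenerate.)

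First I would record the comparison. Since $G$ and $G^{-1}$ are $C^1$, fix $0$, and have derivative the identity there, for every $\epsilon\in(0,\tfrac12)$ there is $\delta>0$ with $|G(z)-z|\le\epsilon|z|$ and $|G^{-1}(z)-z|\le\epsilon|z|$ whenever $|z|<\delta$. Fix $r>0$. For all $n$ so large that $S:=r/|\rho|^n<\delta$ one checks directly from the definition of truncation that $(\rho^n B)_r=\rho^n(B_S)$ and $(\rho^n G(B))_r=\rho^n(G(B)_S)$, so that
\[
d\big((\rho^n B)_r,(\rho^n G(B))_r\big)=|\rho|^n\,d\big(B_S,\,G(B)_S\big).
\]
Now $d(B_S,G(B)_S)\le C\epsilon S$: on $D_S$ the map $G$ displaces points by at most $\epsilon|z|\le\epsilon S$, and if the image of a point of $B\cap D_S$ leaves $D_S$ it lands within $\epsilon S$ of the circle $\partial D_S$, which belongs to both truncations; this controls $\delta(B_S,G(B)_S)$, and applying the same reasoning to $G^{-1}$ controls $\delta(G(B)_S,B_S)$. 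Hence $d\big((\rho^n B)_r,(\rho^n G(B))_r\big)\le C\epsilon r$ for all large $n$, and since $\epsilon$ was arbitrary this tends to $0$.

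To conclude, let $r_0>0$ be such that $d\big((\rho^n F(B))_{r_0},A_{r_0}\big)\to0$, and set $r:=r_0/|a|$. Using that $T^{-1}$ commutes with scaling by $\rho^n$ together with the truncation identity above,
\[
(\rho^n G(B))_r=\big(T^{-1}(\rho^n F(B))\big)_r=T^{-1}\big((\rho^n F(B))_{r_0}\big)\longrightarrow T^{-1}(A_{r_0})=(T^{-1}A)_r\qquad(n\to\infty),
\]
the convergence because $T^{-1}$ is a similarity. Combining with the previous paragraph and the triangle inequality gives $d\big((\rho^n B)_r,(T^{-1}A)_r\big)\to0$. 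Finally, $T^{-1}A$ is self-similar with scale $\rho$ (apply $T^{-1}$ to $(\rho A)_s=A_s$ and use that $T^{-1}$ commutes with multiplication by $\rho$ and the truncation identity), so $B$ is asymptotically $\rho$-self-similar about $0$ with limit model $T^{-1}A$, as desired.

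I expect the only delicate point to be the truncation bookkeeping: one must make sure the circle $\partial D_S$ absorbs simultaneously the nonlinear remainder $G(z)-z$ and the radial mismatch coming from $G^{\pm1}$ not preserving $D_S$ exactly, and one uses $|\rho|>1$ precisely so that, for large $n$, the sets $(\rho^n B)_r$ and $(\rho^n G(B))_r$ only ``see'' the germs of $B$ and of $F$ at $0$, which is where the hypothesis on $T_0F$ lives. Everything else is a direct unwinding of the definitions.
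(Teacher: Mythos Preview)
The paper does not give its own proof of this lemma; it simply refers the reader to Proposition~2.4 in Tan~Lei's paper \cite{T90}. Your argument is correct and supplies the details that the paper omits. The strategy you use---factor $F$ as $T\circ G$ with $G=T^{-1}\circ F$ tangent to the identity, observe that the $\C$-linear scaling $T$ commutes with $z\mapsto\rho^n z$ and transforms truncations by $(T^{-1}X)_r=T^{-1}(X_{|a|r})$, and then control $d\big((\rho^n B)_r,(\rho^n G(B))_r\big)$ by the $C^1$ remainder estimate $|G(z)-z|=o(|z|)$ together with the symmetric estimate for $G^{-1}$---is exactly the standard one, and is in substance the argument Tan~Lei gives. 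Your truncation bookkeeping (in particular the absorption of points that $G$ pushes just outside $D_S$ into the circle $\partial D_S$, which belongs to both truncations) is handled correctly, and the implicit assumption $|\rho|>1$ that you flag is indeed used and is always satisfied in the applications (where $\rho$ is a repelling multiplier). The final remark that $T^{-1}A$ is itself $\rho$-self-similar is true but not needed for the stated conclusion.
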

\begin{proof}
See Proposition 2.4 in \cite{T90}.
\end{proof}

We say a quadratic polynomial $f(z)=z^2+c$ is {\em post-critically finite} if the critical point $0$ is preperiodic, and is {\em Misiurewicz} if $0$ is strictly preperiodic, i.e., it is preperiodic but not periodic. 

In \cite{T90}, Tan proved the following theorem regarding self-similarities of the Julia set and the Mandelbrot set. We will state the version for Misiurewicz points that we will be using.
\begin{namedtheorem}[Tan Lei's Theorem]\label{thm:tl}
Let $c$ be a Misiurewicz point, and $l, p$ be the minimal integers such that
$$
f_c^p(f_c^l(c)) = f_c^l(c)
$$
set $x_c = f_c^l(c)$ and $\rho = (f_c^p)'(x_c)$
\begin{enumerate}
\item The Julia set $J_c$ is asymptotically self-similar about $x_c$ with the scale $\rho$ and the limit model is $L_c=\psi(J_c\cap\bar U)$, where $\psi$ is the linearization map, and $x_c\in\bar U$ is some closed set contained in the domain of $\psi$.
\item The Mandelbrot set $M$ is asymptotically self-similar about $c$ with the scale $\rho$ and the limit model is $\frac{1}{(f_c^l)'(c)} L_c$.
\end{enumerate}
\end{namedtheorem}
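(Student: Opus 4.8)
The plan is to establish (1) and (2) in turn. Part (1) follows softly from Koenigs linearization together with Lemma \ref{limitmodelsimilar}; part (2) is the substantive ``$M$ near $c$ looks like $J_c$ near the critical value'' statement, which needs a transversality input and an escape-time comparison.

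For part (1), recall $x_c$ is repelling of period $p$ with multiplier $\rho=(f_c^p)'(x_c)$, $|\rho|>1$. Koenigs' theorem gives a conformal $\psi$ defined near $x_c$ with $\psi(x_c)=0$, $\psi'(x_c)=1$ and $\psi\circ f_c^p=\rho\cdot\psi$ where defined. I would put $L_c:=\psi(J_c\cap\bar U)$ for a small closed neighbourhood $\bar U$ of $x_c$ and check that $L_c$ is \emph{exactly} self-similar with scale $\rho$ about $0$: since $x_c$ is non-critical, $f_c^p$ is a homeomorphism near $x_c$, and $J_c$ is completely invariant, so $f_c^p(J_c\cap U')=J_c\cap f_c^p(U')$ for a small $U'$, which after applying $\psi$ gives $(\rho L_c)_r=(L_c)_r$ for all sufficiently small $r$. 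Applying Lemma \ref{limitmodelsimilar} to $F=\psi$, whose derivative at $x_c$ is $T=\mathrm{id}$ by the normalization, upgrades this exact self-similarity of $L_c$ to asymptotic self-similarity of $J_c-x_c$ about $0$ with scale $\rho$ and limit model $L_c$; the only care needed is bookkeeping with the truncations $A_r$ as $\rho^n$ expands the picture.

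For part (2), I would work in the holomorphic family $c'\mapsto f_{c'}$ near $c$. Since $\rho\neq1$, the implicit function theorem continues $x_c$ to a holomorphic family $x_{c'}$ of repelling $p$-periodic points, with holomorphic multiplier $\rho(c')$, $\rho(c)=\rho$, and a holomorphically varying Koenigs coordinate $\psi_{c'}$ normalized by $\psi_{c'}(x_{c'})=0$, $\psi_{c'}'(x_{c'})=1$. Put $\phi(c'):=\psi_{c'}(f_{c'}^{\,l}(c'))$, holomorphic near $c$ with $\phi(c)=0$. The key input --- which I would isolate as a lemma --- is the \emph{transversality} $\phi'(c)\neq0$: the critical value $f_{c'}^l(c')$ crosses the moving periodic point $x_{c'}$ at nonzero speed, and this is where one uses that $c$ is Misiurewicz rather than merely a parameter with a repelling cycle. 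Granting it, as long as $f_{c'}^{\,l+pn}(c')$ remains in the linearizing neighbourhood one has $\psi_{c'}(f_{c'}^{\,l+pn}(c'))=\rho(c')^{\,n}\phi(c')$, so there is a first escape time $N(c')$ with $N(c')\to\infty$ as $c'\to c$. Using that $f_c$ is sub-hyperbolic --- a uniform expansivity estimate near $J_c$ for $c'$ close to $c$ --- the condition $c'\in M$ becomes, up to an error tending to $0$, equivalent to $\rho(c')^{N(c')}\phi(c')\in L_c$. Since $\phi$ is a biholomorphism near $c$ one has $c'-c=\phi(c')/\phi'(c)+O(\phi(c')^2)$ and $\rho(c')^{N(c')}=\rho^{N(c')}(1+o(1))$; feeding this in and using the exact self-similarity of $L_c$ from part (1), the truncated rescalings $(\rho^{\,n}(M-c))_r$ form a Cauchy sequence in Hausdorff distance converging to $\tfrac1{\phi'(c)}(L_c)_r$. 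That limit is $L_c$ up to an explicit linear factor; the stated form $\tfrac1{(f_c^l)'(c)}L_c$ then comes from relating this factor to the action of the unbranched map $f_c^l$ near the critical value $c$ (unbranched since the critical orbit of $f_c$ is strictly preperiodic, hence avoids $0$), a computation with the recursion for $\partial_{c'}f_{c'}^n$.

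I expect two points to be the real obstacles. The first is the transversality $\phi'(c)\neq0$: the whole rescaling scheme collapses unless the critical value genuinely leaves the repelling cycle's linearizing neighbourhood at a definite rate, and proving this is precisely where the Misiurewicz (strictly preperiodic) hypothesis enters. The second, more technical, is the \emph{uniformity} of the comparison across the escape-time strata --- making precise that for all $c'$ near $c$ the bounded/unbounded dichotomy for the critical orbit is detected \emph{solely} by the linearized position $\rho(c')^{N(c')}\phi(c')$ --- which requires the expansivity estimate near $J_c$ to be uniform in the parameter and the Hausdorff-distance bookkeeping to glue the strata into one convergent sequence of rescalings. I would expect most of the work to sit in this second point.
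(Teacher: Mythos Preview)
The paper does not prove Tan Lei's Theorem at all: it is stated as a background result and attributed to \cite{T90}, with only the remark that the same proof extends to Multibrot sets. So there is no ``paper's own proof'' to compare your proposal against.

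That said, your sketch follows the standard architecture of Tan's original argument: Koenigs linearization at the repelling cycle for part~(1), then for part~(2) the holomorphic continuation of the periodic point and its linearizing coordinate, the transversality $\phi'(c)\neq 0$, and an escape-time comparison showing that membership in $M$ near $c$ is governed by the linearized position $\rho(c')^{N(c')}\phi(c')$ relative to $L_c$. Your identification of the two genuine difficulties --- transversality and the uniformity of the parameter-plane/dynamical-plane comparison --- is accurate; these are exactly the substantive lemmas in \cite{T90}. One organizational remark: in Tan's paper the factor $1/(f_c^l)'(c)$ arises by first transporting the limit model $L_c$ from $x_c$ back to the critical value $c$ along $f_c^l$ (using the analogue of Lemma~\ref{limitmodelsimilar}), and then proving the cleaner statement that $M$ near $c$ and $J_c$ near $c$ have the \emph{same} limit model. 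Your route computes $\phi'(c)$ directly and then identifies it with $(f_c^l)'(c)$ via the recursion for $\partial_{c'}f_{c'}^n(c')$; this works, but the intermediate step ``$M$ and $J_c$ share a limit model at $c$'' is what is actually used downstream in the present paper, so it is worth isolating.
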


It is worth noting the the same proof can be used to generalize the Tan Lei's Theorem for Multibrot sets.

\section{Branch Points and Misiurewicz Points}\label{sec2}
Let $a\in\partial M$ be a point on the boundary of the Mandelbrot set, we call it a {\em branch point} of $M$ if $M-\{a\}$ has at least $3$ connected components. Similarly, we call $a$ in a connected Julia set $J_c$ a {\em branch point} of $J_c$ if $J_c-\{a\}$ has at least $3$ connected components.
We will be using the following characterization of branch points on the Mandelbrot set. The proof can be found in \cite{Sch04}.
\begin{theorem} [Branch Theorem for Mandelbrot Set] \label{BM}
For every two post-critically finite parameters $c\neq \tilde{c}$, exactly one of the following holds:
\begin{enumerate}
\item $c$ is in the wake of $\tilde{c}$, or vice versa;
\item there is a Misiurewicz point such that $c$ and $\tilde{c}$ are in two different of its subwakes;
\item there is a hyperbolic component such that $c$ and $\tilde{c}$ are in two different of its subwakes.
\end{enumerate}
\end{theorem}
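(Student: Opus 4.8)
The plan is to reduce the trichotomy to the existence of an ``optimal'' separating ray pair between $c$ and $\tilde c$ in the parameter plane. Recall the Douady--Hubbard description of post-critically finite parameters: either $c$ is a Misiurewicz point, at which finitely many parameter rays at strictly preperiodic angles land, cutting $\C$ into finitely many sectors whose bounded pieces are the \emph{subwakes} of $c$; or $c$ lies in a hyperbolic component $H$, whose root is the landing point of exactly two parameter rays at periodic angles bounding the wake of $H$, and satellite components attached along $\partial H$ subdivide this wake into the subwakes of $H$ (the main cardioid being a degenerate case, with the single ray at angle $0$). In every case the boundary of a subwake consists of two parameter rays together with their common landing point, which is either a Misiurewicz point or the root of a hyperbolic component; call such a ray pair a \emph{separating pair}. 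The combinatorial fact underlying the word ``exactly'' is the nesting lemma: two separating pairs bound regions of $\C$ that are either nested or have disjoint interiors, since rational parameter rays land and distinct ray pairs are disjoint away from their endpoints. Granting this, the three cases are mutually exclusive (e.g.\ if $c$ lies in the wake of $\tilde c$ then that wake, hence $c$, lies inside whichever subwake of a third point contains $\tilde c$, so $c$ and $\tilde c$ cannot lie in \emph{different} subwakes of it).

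Next I would install the combinatorial translation. By the correspondence between dynamical and parameter rays (Expos\'e XXII of \cite{DH85}; see also \cite{Sch04}), the angles of parameter rays landing at or ``behind'' $c$ form an explicit closed subset $I_c\subset\R/\Z$ determined by the orbit portrait of $f_c$, and ``$c$ is in the wake of $\tilde c$'' corresponds to $I_c\subset I_{\tilde c}$; this disposes of case (1), and the nesting lemma then forbids (2) and (3). So assume neither parameter is in the wake of the other. Since wakes containing a common point are nested, the collection of all subwakes (of Misiurewicz points or of hyperbolic components) containing both $c$ and $\tilde c$ is a nested family; it is nonempty, because everything beyond the main cardioid is exhausted by its subwakes, and it is finite, because the internal address of the Misiurewicz point $c$ is finite so only finitely many subwakes contain $c$. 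Hence it has a unique minimal element $W$, rooted at a branching object $b$ which is a Misiurewicz point or a hyperbolic component.

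The core step is that $c$ and $\tilde c$ lie in two \emph{different} subwakes of $b$. Each of $c,\tilde c$ lies in a subwake of $b$ or equals $b$ / lies in the component $b$; the latter would put one of them in the wake of the other, i.e.\ case (1), contrary to assumption. If $c$ and $\tilde c$ lay in a \emph{common} subwake $W'$ of $b$, then $W'\subsetneq W$ would be a smaller subwake containing both, contradicting minimality of $W$. Hence they lie in different subwakes, giving case (2) if $b$ is a Misiurewicz point and case (3) if $b$ is a hyperbolic component. What makes the descent terminate when $c\neq\tilde c$ — rather than the two points remaining ``infinitesimally adjacent'' with no separating pair between them — is the triviality of the fibers of post-critically finite parameters: a singleton fiber at a Misiurewicz point comes from the Yoccoz puzzle and the expansion of $f_c$ along the preperiodic critical orbit transported to the parameter plane, while the center of a hyperbolic component is cut off from everything outside the closure of its component by the ray pairs at the roots on its boundary.

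The genuine obstacle, and the reason this is a theorem rather than a formality, is precisely the rigidity statement that the relevant separators have \emph{rational} angles and land where claimed: one must rule out that $c$ and $\tilde c$ are separated only by pairs of irrational rays, or by no rays at all, which requires the Douady--Hubbard landing theorems for rational parameter rays, the combinatorics of orbit portraits, and Thurston's quadratic minor lamination to control which angles land together. The clean way to package all of this is Schleicher's theory of fibers \cite{Sch04}: establish (i) the nesting lemma for wakes, (ii) triviality of fibers of Misiurewicz points and of centers of hyperbolic components, and (iii) finiteness of internal addresses, and then combine (i)--(iii) with the descent above to obtain the stated trichotomy, ``exactly one'' coming from (i). I would follow that route.
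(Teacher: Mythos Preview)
The paper does not prove this theorem at all: it is quoted as background, with the remark ``The proof can be found in \cite{Sch04}'' immediately preceding the statement. There is therefore nothing in the paper to compare your proposal against. Your outline is a faithful sketch of Schleicher's fiber-theoretic argument in \cite{Sch04} (nesting of wakes, triviality of fibers at post-critically finite parameters, termination of the descent), which is exactly the reference the paper cites; so your approach is the intended one. One small remark: the finiteness step you phrase as ``the internal address of the Misiurewicz point $c$ is finite'' is morally right but slightly misstated---the cleaner justification is that a post-critically finite parameter is at most finitely renormalizable (equivalently, its kneading sequence is eventually periodic), so only finitely many wakes can contain it.
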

Here, with the identification of a hyperbolic component with its center, the {\em wake} of a hyperbolic component $W$ is the connected component in the complex plane separated from the origin by the two external rays landing at the root of $W$. A {\em subwake} of $W$ is the wake of a hyperbolic component other than $W$ whose root is on $\partial W$. A {\em subwake} of a Misiurewicz point $c$ is a component of the complement in $\C$ of the external rays landing at $c$ which does not contain the origin, and the {\em wake} of $c$ is the union of all subwakes together with the rays between them.

\begin{namedtheorem}[The Branch Point Theorem]\label{thm:branch}
All the branch points of the Mandelbrot set $M$ are Misiurewicz points.
\end{namedtheorem}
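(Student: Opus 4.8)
The plan is to combine the Branch Theorem for the Mandelbrot set (Theorem~\ref{BM}) with the fact that hyperbolic components cannot contribute genuine branch points. First I would take a branch point $a \in \partial M$, so that $M \setminus \{a\}$ has at least three connected components, and I would like to apply the trichotomy of Theorem~\ref{BM} to rule out that $a$ is anything other than a Misiurewicz point. The subtlety is that Theorem~\ref{BM} is phrased as a statement about pairs of post-critically finite parameters $c \neq \tilde c$, whereas a branch point $a$ need not itself be post-critically finite a priori; the way around this is that the three (or more) complementary components of $M\setminus\{a\}$ each contain hyperbolic components, and hence post-critically finite parameters $c_1, c_2, c_3$ lying in three distinct complementary components. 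So I would pick such $c_1, c_2, c_3$ and feed the pairs into Theorem~\ref{BM}.

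The key geometric observation is that each of the alternatives in Theorem~\ref{BM} (wake containment, subwakes of a Misiurewicz point, subwakes of a hyperbolic component) locates the ``separating object'' for the pair $(c_i, c_j)$, and the separating object must be essentially $a$ itself (or lie in the closure of all three complementary components, i.e.\ on their common boundary). I would argue: since $c_1, c_2, c_3$ lie in three different components of $M \setminus \{a\}$, no two of them can be in a common wake of a single hyperbolic component unless that wake's boundary rays pass through $a$; similarly for subwakes. Running this for all three pairs, the only way to separate all three pairs simultaneously by objects meeting at a single point is that that point is $a$, and that point is either a Misiurewicz point or a hyperbolic component (with $a$ on its root or boundary). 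The case that $a$ is the root of a hyperbolic component, or more generally that the separation is always ``type (1) or (3)'' with the relevant hyperbolic component having $a$ on its boundary, I would eliminate by a local picture: at the root of a hyperbolic component $W$, the set $M$ near that point looks like $\overline W$ together with the wake attached along the two external rays landing at the root, which locally disconnects $M$ into only two pieces, not three — so a hyperbolic-component root is not a branch point. (One also checks no interior point of a hyperbolic component is a branch point, which is clear since $M$ contains a neighborhood there.)

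The main obstacle I anticipate is the bookkeeping needed to pass from ``$M \setminus \{a\}$ has $\geq 3$ components'' to a configuration of three post-critically finite parameters to which Theorem~\ref{BM} applies cleanly, together with carefully checking that the separating Misiurewicz point or hyperbolic component produced by the theorem really is forced to be $a$ and cannot be some other point lying deep inside one of the complementary components. This requires knowing that wakes and subwakes are ``nested'' in the right way relative to the complementary components of $M \setminus \{a\}$ — essentially that the external rays landing at $a$ are exactly what separate $M$ near $a$, which is part of the structure theory of $M$ at Misiurewicz points and at hyperbolic roots. I would organize the argument so that once we know the separating point for each pair coincides with $a$, alternative (3) is excluded by the local two-component picture at a hyperbolic root, leaving only alternative (2): $a$ is a Misiurewicz point. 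This completes the proof.
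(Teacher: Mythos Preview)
Your approach is correct and follows essentially the same route as the paper. The one simplification you miss is that the paper chooses $0$ (the center of the main cardioid, itself post-critically finite) as one of the three separating parameters: since wakes and subwakes are \emph{by definition} regions not containing the origin, putting $0,c_1,c_2$ in three distinct components of $M\setminus\{a\}$ and applying Theorem~\ref{BM} to the single pair $(c_1,c_2)$ makes alternatives~(1) and~(3) quick to exclude and forces the Misiurewicz point in~(2) to be $a$. This sidesteps the pairwise bookkeeping over $\{c_1,c_2,c_3\}$ and the separate local ``two-component at a hyperbolic root'' analysis you outline, though your version would also go through.
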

\begin{proof}
Let $a$ be a branch point of $M$, choose $c_1, c_2$ be two post-critically finite parameter such that $0, c_1, c_2$ are in different components of $M-\{a\}$. It can be easily shown that situation $(1)$ and $(3)$ in Theorem \ref{BM} cannot occur for $c_1, c_2$. Hence, there is a Misiurewicz point such that $c_1$ and $c_2$ are in two different subwakes. This Misiurewicz point must coincide with $a$.
\end{proof}

\begin{rmk}
Note that the above two theorems are also true for Multibrot sets, see \cite{Sch04}.
\end{rmk}

\section{Invariant Geodesic Laminations}\label{sec4}
Let $K$ be a full nondegenerate continuum in the complex plane. 
This means that $K$ is a compact connected set of cardinality greater than one and
$\C-K$ is also connected.
One can associate a Riemann mapping
$$
\phi:\C-\overline\Delta \longrightarrow \C-K
$$
normalized so that the derivative at infinity is positive.

The image of the ray $\mathcal{R}^\theta = \{r \me^{2\pi i\theta}:r>1\}$ is called the {\em (dynamical) external ray} of {\em external angle} $\theta$.
We say an external ray $\mathcal{R}^\theta$ lands $x$ if 
$$
\lim_{r\to 1^+} \phi(re^{i\theta}) = x
$$
The landing problems of external rays are related to the boundary behavior for the Riemann mapping.
If $\partial K$ is locally connected, then we have the Carath\'eodory Theorems:
\begin{namedtheorem}[Carath\'eodory's]\label{thm:c}
Let $\phi:\C-\overline\Delta \longrightarrow \C-K$ be a conformal map, then $\phi$ extends continuously to the boundary $S^1$ if and only if $\partial K$ is locally connected.
\end{namedtheorem}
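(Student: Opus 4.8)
This is the classical theorem of Carath\'eodory; I recall the standard argument, which I find cleanest to run in the exterior coordinate used here (a change of variable $z\mapsto 1/z$ would reduce everything to the unit disc, since $\infty$ is an interior point of $\C-K$ bounded away from $\partial K$). For the \emph{easy direction}, suppose $\phi$ extends continuously to $S^1$, giving a continuous $\overline\phi\colon\hat\C\setminus\Delta\to\hat\C$. First I would check $\overline\phi(S^1)=\partial K$: the map $\phi\colon\C-\overline\Delta\to\C-K$ is a proper biholomorphism, so if $z_n\to\zeta\in S^1$ then $\phi(z_n)$ leaves every compact subset of $\C-K$ and therefore accumulates on $\partial K$ (the images stay in a bounded region, away from $\infty$, once the neighborhood of $\zeta$ is small), hence $\overline\phi(S^1)\subseteq\partial K$; conversely $\overline\phi(\hat\C\setminus\Delta)$ is compact, contains $\C-K$, and misses $\mathrm{int}\,K$, so it equals $\hat\C\setminus\mathrm{int}\,K$, which forces $\overline\phi(S^1)=\partial K$. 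Since $\partial K$ is then the continuous image of the locally connected continuum $S^1$, it is a locally connected continuum.

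For the \emph{hard direction}, assume $\partial K$ is locally connected and fix $\zeta_0\in S^1$. For small $\rho>0$ let $\gamma_\rho$ be the crosscut of $\C-\overline\Delta$ cut out by $\{|z-\zeta_0|=\rho\}$, let $C_\rho=\phi(\gamma_\rho)$, and set $\ell(\rho)=\int_{\gamma_\rho}|\phi'|\,|dz|$. By Cauchy--Schwarz, $\ell(\rho)^2\le 2\pi\rho\int_{\gamma_\rho}|\phi'|^2\,|dz|$; dividing by $\rho$ and integrating over $\rho\in(0,r_0)$, the right-hand side becomes $2\pi$ times the area of $\phi(\{|z-\zeta_0|<r_0\}\cap(\C-\overline\Delta))$, which is bounded (hence of finite area) once $r_0$ is small, so $\int_0^{r_0}\ell(\rho)^2\,\frac{d\rho}{\rho}<\infty$. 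Consequently there is a sequence $\rho_n\downarrow 0$ with $\ell(\rho_n)\to 0$; in particular $\operatorname{diam}C_{\rho_n}\to 0$, and by properness of $\phi$ the two endpoints of the crosscut $C_{\rho_n}$ converge to points $a_n,b_n\in\partial K$ with $|a_n-b_n|\le\operatorname{diam}C_{\rho_n}\to 0$.

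Now local connectedness enters decisively. Because $\partial K$ is locally connected and $|a_n-b_n|\to 0$, I can join $a_n$ to $b_n$ inside $\partial K$ by a connected set $\beta_n\subseteq\partial K$ with $\operatorname{diam}\beta_n\to 0$. The small loop $C_{\rho_n}\cup\beta_n$ separates the plane, and a Jordan-type argument shows that $\phi$ maps the bounded component of $(\C-\overline\Delta)\setminus\gamma_{\rho_n}$ adjacent to $\zeta_0$ into the bounded complementary component of $C_{\rho_n}\cup\beta_n$; hence the diameter of the image of that half-disc is at most $\operatorname{diam}C_{\rho_n}+\operatorname{diam}\beta_n\to 0$. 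Since these half-discs shrink to $\zeta_0$ and form a neighborhood basis of $\zeta_0$ in $\C-\overline\Delta$, the cluster set of $\phi$ at $\zeta_0$ is a single point. Running this at each $\zeta_0$ and observing that all estimates are locally uniform in $\zeta_0$, one concludes that $\phi$ extends to a (uniformly) continuous map on $\overline{\C-\overline\Delta}$, which is the desired boundary extension.

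The easy direction is soft point-set topology and the length--area estimate is standard, so I expect the main obstacle to be the last step: showing rigorously that the image of the small half-disc is trapped on the bounded side of $C_{\rho_n}\cup\beta_n$. This needs a careful treatment of how the endpoints of the crosscut $C_{\rho_n}$ land on $\partial K$ together with a clean separation argument for the Jordan-type loop $C_{\rho_n}\cup\beta_n$, and it is the only place where local connectedness of $\partial K$ is used in an essential way --- so that is where I would put the effort.
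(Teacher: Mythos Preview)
The paper does not supply a proof of Carath\'eodory's Theorem; it simply quotes it as a classical result in Section~\ref{sec4} and moves on. So there is no ``paper's own proof'' to compare against.

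That said, your proposal is the standard textbook argument (essentially the one in Pommerenke or in Milnor's \emph{Dynamics in One Complex Variable}, Theorem~17.14): the easy direction is soft topology, and the hard direction combines the length--area (Wolff) lemma with local connectedness of $\partial K$ to trap the image of a small half-disc. Your outline is correct, and you have correctly located the only genuinely delicate step, namely the separation argument showing that the image of the small half-disc lies on the bounded side of $C_{\rho_n}\cup\beta_n$. Two small remarks that would sharpen it: (i) to make the extension \emph{continuous} (not just defined pointwise), you should invoke that a compact, locally connected metric space is \emph{uniformly} locally connected, so the diameter of $\beta_n$ can be bounded in terms of $|a_n-b_n|$ alone; combined with the uniform area bound this makes all estimates uniform in $\zeta_0$, as you claim; (ii) for the separation step it is cleanest to argue that $C_{\rho_n}$ is a genuine crosscut of the simply connected domain $\C-K$ (finite length forces its endpoints to land on $\partial K$), hence separates $\C-K$ into two Jordan subdomains, and then use that the small one has boundary contained in $C_{\rho_n}\cup\partial K$, so any point of it can be joined to $a_n$ by an arc avoiding $C_{\rho_n}\cup\beta_n$ only if the arc leaves $\C-K$ --- which pins down the component.
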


Now if $f(z)$ is a monic polynomial with connected Julia set, 
the Riemann mapping $\phi:\C-\overline\Delta \longrightarrow \C-K$, usually called {\em the B\"ottcher map}, also gives a conjugation between $f$ with $z^d$, i.e., $\Psi(z^d) = f(\Psi(z))$ outside the unit disk.
If the Julia set is also locally connected, then the above conjugacy extends to a semi-conjugacy on the circle.

For monic polynomials with connected and locally connected Julia set, it is convenient to record the information of the semi-conjugacy in a geodesic lamination. Following \cite{Th87} \cite{Th09}, we will define
\begin{defn}
A {\em geodesic lamination} is a set $L$ of chords in the closed
unit disk $\Delta$, called leaves of $L$, satisfying the following conditions:
\begin{itemize}
    \item [(GL1)] elements of $L$ are disjoint, except possibly at their endpoints;
    \item [(GL2)] the union of $L$ is closed.
\end{itemize}
\end{defn}

A {\em gap} of a lamination
$L$ is the closure of a component of the complement of $\cup L$.
Any gap for a geodesic lamination is the convex hull of its intersection with the boundary
of the disk.

\begin{figure}
  \centering
  \includegraphics[width=0.45\textwidth]{./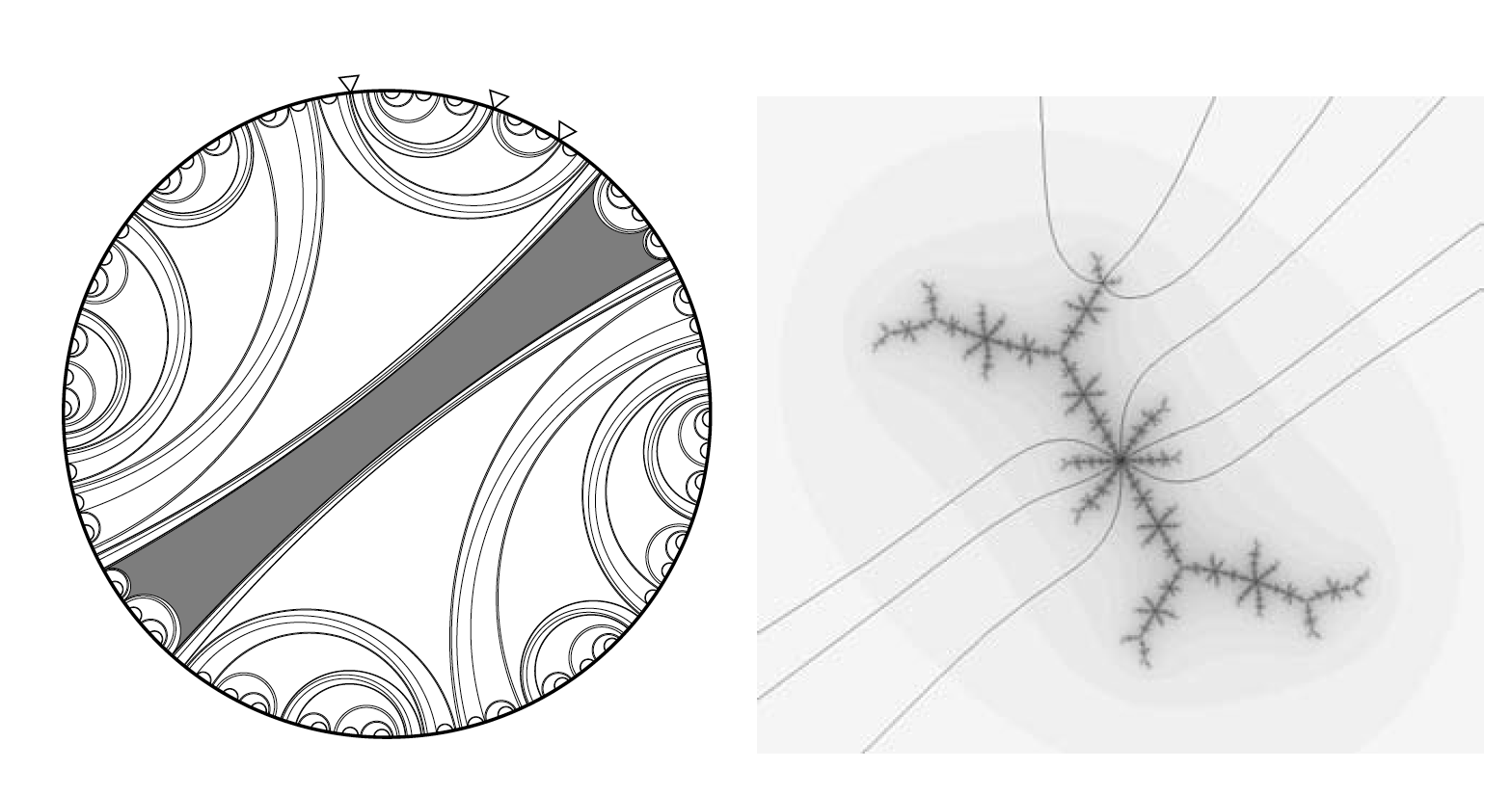} 
  \includegraphics[width=0.45\textwidth]{./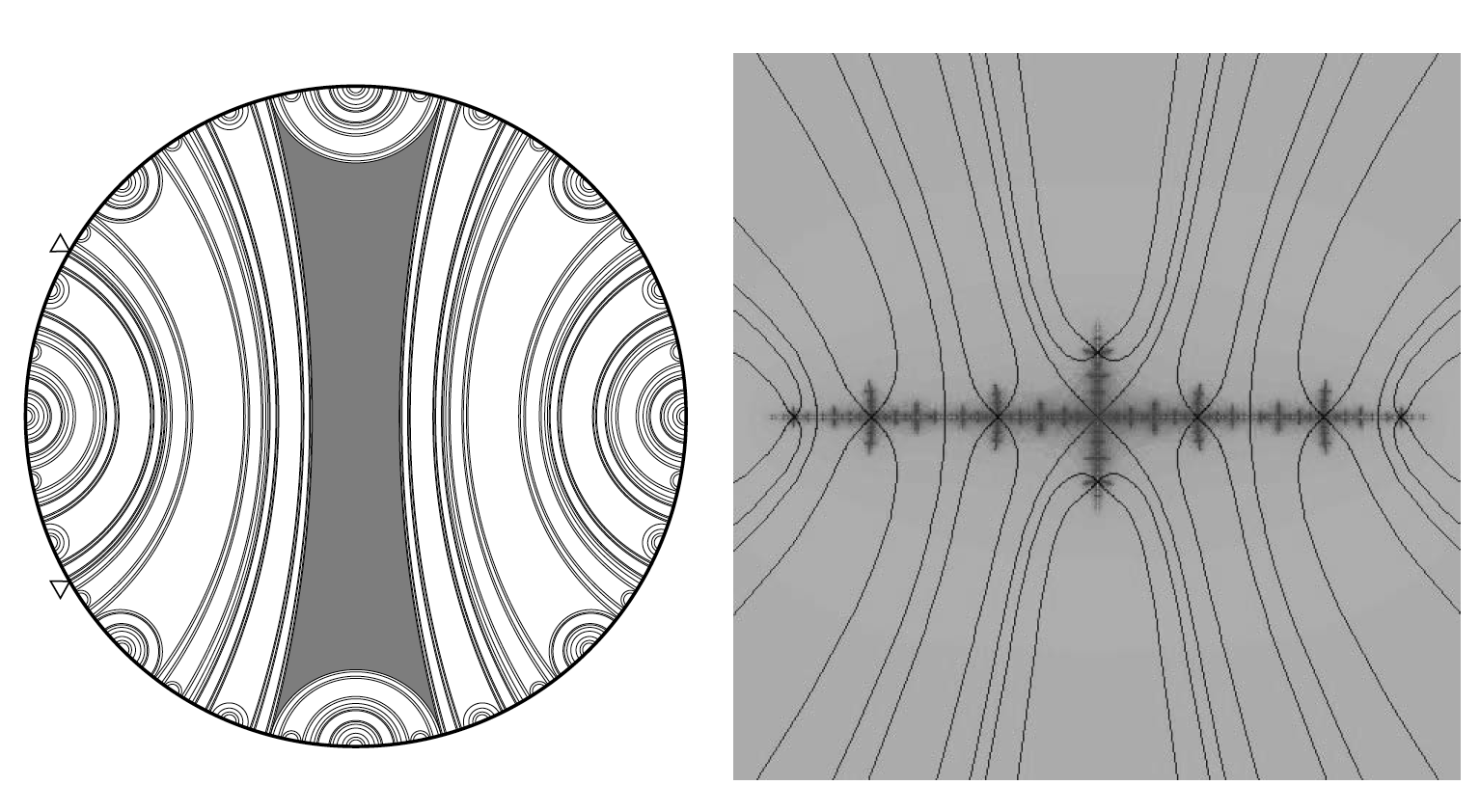}
  \caption{The quadratic lamination associated to a dendrite Julia sets, Figure taken from \cite{Th09}}
\end{figure}

\begin{defn}
Let $m_d:S^1\cong\R/\Z \longrightarrow S^1\cong\R/\Z$ be the multiplying $d$ map
$$
m_d(t) = dt
$$
A geodesic lamination is called {\em $d$-invariant} if
\begin{itemize}
    \item [(GL3)] Forward invariance: if any leaf $pq$ is in $L$, then either $m_d(p) = m_d(q)$,
or $m_d(p)m_d(q)$ is in $L$.
    \item [(GL4)] Backward invariance: if any leaf $pq$ is in $L$, then there exists a collection
of $d$ disjoint leaves, each joining a preimage of $p$ to a preimage of $q$.
    \item [(GL5)] Gap invariance: for any gap $G$, the convex hull of the image of
$G_0 = G \cap S^1$ is either a gap or a leaf or a single point.
\end{itemize}
\end{defn}

We will call a invariant geodesic lamination {\em non-trivial} if $L$ contains at least one non-degenerate leaf.

For a monic polynomial $f$ with connected and locally connected Julia set, we can associate an equivalence relation $\sim$ on the circle recording the class of points landing at the same point. $S^1/\sim$ gives a topological model for the Julia set.
Note that each equivalence class contains only finitely many points.
We can construct a geodesic lamination by forming ideal polygons for each equivalence class.
One can check that this is indeed an invariant geodesic lamination.
Note that the invariant geodesic lamination associated to $f$ with connected and locally connected Julia set is trivial if and only if the Julia set is a Jordan curve.

We will need the following fact, the proof is similar to the one for Proposition II.6.1 of \cite{Th09}:
\begin{prop}
If $L$ is an invariant geodesic lamination other than the lamination
whose leaves connect $t$ to $\frac{k}{d-1}-t$ for all $t \in \partial\overline\Delta\cong\R/\Z$, where $k=0,...,d-2$, then the gaps of $L$ are
dense in $\Delta$.
\end{prop}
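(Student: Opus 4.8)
\emph{Proof proposal.} The plan is to prove the contrapositive: if the gaps of $L$ are not dense in $\Delta$, then $L$ is one of the exceptional laminations in the statement. Since $\bigcup L$ is closed by (GL2), the gaps fail to be dense exactly when $\bigcup L$ has nonempty interior, so fix an open disk $B$ with $\overline{B}\cap S^1=\emptyset$ and $B\subset\bigcup L$. Distinct leaves of $L$ meet only at their endpoints on $S^1$, so every point of $B$ lies on a unique leaf, and these leaves foliate $B$.

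First I would upgrade this to a continuous one-parameter family of leaves. Take a leaf $\ell_0$ meeting $B$ and a short arc $\tau\subset B$ transverse to $\ell_0$; as a point $z$ moves along $\tau$, the endpoints of the leaf $\ell_z$ vary continuously and, because the chords are pairwise disjoint, monotonically and in opposite directions along $S^1$. Shrinking $\tau$, we obtain two disjoint closed arcs $I,I'\subset S^1$ and an orientation-reversing homeomorphism $h\colon I\to I'$ with $\{a,h(a)\}\in L$ for all $a\in I$; that is, $L$ contains a whole \emph{strip} of leaves.

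Next I would spread this strip around the circle using invariance. Forward invariance (GL3) sends each leaf $\{a,h(a)\}$ to a leaf $\{m_d(a),m_d(h(a))\}$ except at the finitely many $a$ for which $a$ and $h(a)$ have the same image; subdividing $I$ into arcs on which $m_d$ is injective, the images form strips over the arcs $m_d(I)$, and their leaves are automatically disjoint since they lie in $L$. Because $m_d$ is expanding, after finitely many iterates the base arcs exhaust $S^1$; as every interior point of $\Delta$ lies on a unique leaf, the resulting strips are pairwise compatible, and (using that $\bigcup L$ is closed, with backward invariance (GL4) to fill in where convenient) one concludes $\bigcup L=\overline\Delta$. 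Thus $L=\{\,\{a,H(a)\}:a\in S^1\,\}$ for an orientation-reversing circle homeomorphism $H$ (orientation-reversing since the chords cannot cross). Applying (GL3) to $\{a,H(a)\}$ gives $H\circ m_d=m_d\circ H$ on $S^1$, the identity extending over the finitely many exceptional points by continuity. The reflection $R(t)=-t$ also commutes with $m_d$, so $G:=H\circ R$ is orientation-preserving and commutes with $m_d$; writing a lift of $G$ as $t\mapsto t+\psi(t)$ with $\psi$ of period $1$, the relation $\psi(dt)=d\,\psi(t)+c$ forces $\psi$ constant, so $G$ is rotation by $\tfrac{k}{d-1}$ for some integer $k$. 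Hence $H(t)=\tfrac{k}{d-1}-t$ with $k\in\{0,\dots,d-2\}$, i.e.\ $L$ is one of the listed laminations.

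The step I expect to be delicate is the propagation: checking that the forward translates of the local strip genuinely assemble into a foliation of the entire closed disk — in particular handling the leaves of the strip whose forward orbit degenerates to a point, and verifying that overlapping strips fit together — rather than the essentially formal rigidity computation that finishes the argument. This is exactly where the proof runs parallel to that of Proposition II.6.1 of \cite{Th09}.
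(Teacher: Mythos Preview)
The paper does not actually write out a proof of this proposition; it only states that the argument is similar to Proposition II.6.1 in \cite{Th09}. Your proposal is precisely the natural degree-$d$ adaptation of Thurston's argument there --- localize to a strip of leaves inside the region with no gaps, propagate by the expanding dynamics of $m_d$ to a global foliation, then classify orientation-reversing circle involutions commuting with $m_d$ --- and the steps you outline, including the rigidity computation for $\psi$, are correct. So your approach coincides with what the paper intends.
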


Note that if $f$ is a monic polynomial of degree $d$ with connected and locally connected Julia set 
which gives the above lamination, then $f$ is post critically finite and the Julia set is topologically an interval.
Note that the conjugates of Chebyshev polynomials $T_d$ or its negative $-T_d$ give all the above lamination, so by the classification of post critical polynomials (see Theorem II of \cite{BFH92}),
$f$ is a conjugate of $T_d$ or $-T_d$.

Hence, we have
\begin{cor}\label{dense}
Let $f(z)$ be a monic polynomial of degree $d$ with connected and locally connected Julia set which is not conjugate to Chebyshev polynomials $T_d$ or $-T_d$, then the invariant geodesic lamination associated to it has dense gaps in $\Delta$.
\end{cor}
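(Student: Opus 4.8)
The plan is to deduce the corollary directly from the Proposition immediately preceding it, using the discussion about Chebyshev polynomials to eliminate the one exceptional lamination the Proposition permits. Concretely, I would argue by contradiction: suppose the invariant geodesic lamination $L$ associated to $f$ does \emph{not} have dense gaps in $\Delta$. Then by the Proposition, $L$ must be the exceptional lamination, namely the one whose leaves join $t$ to $\frac{k}{d-1}-t$ for all $t\in\R/\Z$ and some fixed $k\in\{0,\dots,d-2\}$.

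Next I would read off the topology this forces on $J$. The leaves of the exceptional lamination are exactly the chords invariant under the reflection $t\mapsto \frac{k}{d-1}-t$; these chords foliate the whole closed disk, so in fact $L$ has no gaps at all and $S^1/{\sim}$ is the quotient of the circle by an orientation-reversing involution, i.e. an arc. Since $S^1/{\sim}$ is the topological model of the Julia set (here I invoke local connectedness and Carath\'eodory's theorem), $J$ is homeomorphic to an interval, and in particular $f$ is post-critically finite with interval Julia set. I would then cite the classification of post-critically finite polynomials with interval Julia set (Theorem II of \cite{BFH92}, exactly as in the paragraph before the statement) to conclude that $f$ is conformally conjugate to $T_d$ or to $-T_d$. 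This contradicts the hypothesis, so $L$ must have dense gaps.

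The only genuinely non-formal point is the middle step: matching the combinatorial exceptional lamination to the actual dynamics of $f$ and confirming it is realized precisely by the conjugates of $\pm T_d$ (rather than by no polynomial, or by something whose quotient is more complicated). I would not reprove this, instead citing \cite{BFH92} (and \cite{Th09} for the lamination/quotient correspondence), which the text has already set up. Everything else is just bookkeeping with the dichotomy supplied by the Proposition, so the corollary follows in a few lines.
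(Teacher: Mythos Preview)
Your proposal is correct and follows essentially the same route as the paper: the paper's argument is the paragraph immediately preceding the corollary, which applies the Proposition to reduce to the exceptional lamination, observes that its quotient is an interval so $f$ is post-critically finite with interval Julia set, and then invokes \cite{BFH92} to conclude $f$ is conjugate to $\pm T_d$. Your contrapositive framing is the same argument.
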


\section{Polynomial Decompositions}\label{sec5}
In our study of the local symmetries of the Julia sets, we need to consider the decomposition of a polynomial and when two polynomials commute.

In \cite{R23}, Ritt proves:
\begin{theorem}
Let $\Phi, \Psi$ be two commuting polynomials which are not conjugate to monomial or $\pm$ Chebyshev polynomial, then there exists a polynomial of the form
$$
G(z) = zR(z^r)
$$
where $R$ is a polynomial, such that $\Phi$ and $\Psi$ are simultaneously conjugate to
$\epsilon_1 G^\nu(z)$ and $\epsilon_2 G^\mu(z)$ where $\epsilon_1$ and $\epsilon_2$ are $r$-th roots of unity.
\end{theorem}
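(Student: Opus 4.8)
The final statement is Ritt's classical theorem on permutable (commuting) polynomials, quoted here from \cite{R23}; below is the strategy I would follow to prove it. The combinatorial route, essentially Ritt's own, reads the identity $\Phi\circ\Psi=\Psi\circ\Phi$ as the assertion that the single polynomial $H:=\Phi\circ\Psi$ admits two decompositions, $\Phi\circ\Psi$ and $\Psi\circ\Phi$. Refining each into indecomposables and applying Ritt's first theorem, the two resulting chains have the same multiset of factor degrees; applying Ritt's second theorem, the two chains are connected by a finite sequence of elementary swaps, each replacing adjacent factors $A\circ B$ by $C\circ D$ with $A\circ B=C\circ D$, $\deg A=\deg D$, $\deg B=\deg C$, and the complete list of such ``bidecomposable'' configurations is known: up to pre- and post-composition with linear maps they are the Chebyshev relation $T_{m}\circ T_{n}=T_{n}\circ T_{m}$, the monomial relations $z^{m}\circ z^{n}=z^{n}\circ z^{m}$ with $\gcd(m,n)=1$, and the relations $z^{n}\circ\bigl(z^{k}R(z)^{n}\bigr)=\bigl(z^{k}R(z^{n})\bigr)\circ z^{n}$ with $R(0)\neq0$ (see \cite{R22}, \cite{E41}, \cite{GTZ08}). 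I would then track the positions of $\Phi$ and $\Psi$ along this chain of swaps: since $\Phi$ and $\Psi$ are assumed not conjugate to a monomial or $\pm T_{d}$, the first two families on the list can occur only degenerately, and the third is precisely what produces a common building block $G(z)=zR(z^{r})$, with $\Phi$ and $\Psi$ recovered as $\epsilon_{1}G^{\nu}$ and $\epsilon_{2}G^{\mu}$ for $r$-th roots of unity $\epsilon_{i}$ after a simultaneous linear conjugacy.

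A second, more dynamical route, closer to the lamination picture of Section~\ref{sec4}, runs as follows. Commuting polynomials share their filled Julia set $K$: since $\Phi^{n}(\Psi(z))=\Psi(\Phi^{n}(z))$ and both $\Phi,\Psi$ are polynomials of degree at least $2$, one checks $\Psi^{-1}(K(\Phi))=K(\Phi)$, hence (the orbit of any point of $K(\Phi)$ under $\Psi$ staying in the compact set $K(\Phi)$) $K(\Phi)\subseteq K(\Psi)$, and symmetrically, so $K(\Phi)=K(\Psi)=:K$ and $g_{\Phi}=g_{\Psi}=:g$ with $g\circ\Phi=(\deg\Phi)\,g$, $g\circ\Psi=(\deg\Psi)\,g$. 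As $K$ is connected the B\"ottcher coordinate $\phi$ of $\Phi$ is a conformal isomorphism $\C\setminus K\to\C\setminus\overline{\D}$ (capacity normalized), and any polynomial $P$ with $g\circ P=(\deg P)\,g$ becomes in this coordinate a proper self-map of $\C\setminus\overline{\D}$ fixing $\infty$ and preserving every circle $|w|=\rho$, hence a twisted monomial $w\mapsto\zeta_{P}w^{\deg P}$ with $|\zeta_{P}|=1$; in particular $\Phi$ and $\Psi$ are such twisted monomials. If $K$ is a round disc or a segment then $\Phi,\Psi$ are conjugate to $z^{d}$ or to $\pm T_{d}$ and there is nothing to prove; otherwise, by Corollary~\ref{dense} (using local connectivity of $J$, which must be argued separately in this commuting situation, or else replacing the lamination by a prime-ends argument) the invariant lamination $S^{1}/\!\!\sim$ of $\Phi$ is non-degenerate with dense gaps, and $\Phi$, $\Psi$ and all the twisted monomials for $K$ descend to self-maps of it. One then shows that a lamination carrying two commuting such self-maps, together with its finite group of rotational symmetries, is a ``cyclic tower'' generated by a single minimal self-map, which pushed back to polynomials is exactly a $G(z)=zR(z^{r})$ with $\Phi=\epsilon_{1}G^{\nu}$, $\Psi=\epsilon_{2}G^{\mu}$.

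The main obstacle in either route is the classification underlying it. In the combinatorial route it is the exhaustive description of bidecomposable pairs in Ritt's second theorem (itself the deepest ingredient, and the one in which the monomial, Chebyshev, and $zR(z^{r})$ families make their forced appearance — which is exactly why the hypothesis must exclude the first two), together with the bookkeeping needed to follow $\Phi,\Psi$ through the swap chain and extract the simultaneous normal form. In the dynamical route it is the rigidity statement for laminations admitting two commuting self-maps, plus the auxiliary point that the common Julia set is locally connected. Since the result is classical, in the body of the paper one simply invokes \cite{R23}.
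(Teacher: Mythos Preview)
The paper does not prove this theorem; it is stated in Section~\ref{sec5} with a bare citation to Ritt \cite{R23} and then used as a black box. Your final sentence already anticipates this correctly, so there is nothing to compare against.

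As to your two sketched routes: both are reasonable outlines, and you are honest about where the real work lies. A few remarks. In the combinatorial route, the passage from ``$\Phi\circ\Psi$ and $\Psi\circ\Phi$ are two decompositions of the same polynomial'' to ``$\Phi$ and $\Psi$ are iterates of a common $G$ up to roots of unity'' is not as mechanical as the sketch suggests: Ritt's second theorem gives a chain of elementary swaps between the two refined factorizations, but extracting a \emph{single} generator $G$ with the stated normal form requires additional structural arguments (this is where Ritt's original paper does substantial work beyond his decomposition theorems). In the dynamical route, you silently assume $K$ is connected when you invoke the B\"ottcher coordinate as a global conformal isomorphism, and you flag but do not resolve local connectivity of $J$; both are genuine issues if one wants a self-contained proof along these lines. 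None of this matters for the paper, which simply quotes the result.
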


Recall that we defined the semigroup of polynomials that commute with some iterate of $f$:
$$
\Sigma_f := \{ g: g \text{ is a polynomial such that } g\circ f^n = f^n \circ g \text{ for some } n\in\Z_{>0}\}
$$
and the linear symmetry group
$$
\Sigma^*_f := \{\sigma\in \Sigma_f: \sigma \text{ is linear.}\} 
$$

By conjugation of a linear map, we may assume that $f$ is monic and centered. 
In this case, elements in $\Sigma^*_f$ are roots of unity. 
It is easy to see that the $n$-th root of unity $\zeta_n$ commute with a monic, centered polynomial $f$ if and only if $f(z) = z R(z^n)$ for some polynomial $R$.
Using polynomial decompositions, we also have the following
\begin{theorem}
Let $f$ be a monic, centered polynomial,
then $\Sigma^*_f$ is not trivial if and only if $f(z) = z^s R(z^n)$ for some polynomial $R$ and $n>1$ and $(s,n) = 1$.
\end{theorem}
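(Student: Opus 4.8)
The plan is to prove the two directions separately; the ``if'' direction is a direct computation, and the ``only if'' direction I would handle via the Böttcher coordinate at infinity. For ``if'', suppose $f(z)=z^sR(z^n)$ with $(s,n)=1$ and $n>1$, and set $\sigma(z)=\zeta z$ with $\zeta=\zeta_n$. Using $\zeta^n=1$ one checks $f(\zeta z)=\zeta^s f(z)$, and the same substitution applied repeatedly gives $f^k(\zeta z)=\zeta^{s^k}f^k(z)$ for every $k\ge 1$. Since $(s,n)=1$, $s$ is a unit modulo $n$; taking $k$ to be the multiplicative order of $s$ mod $n$ gives $s^k\equiv 1\pmod n$, hence $f^k\circ\sigma=\sigma\circ f^k$. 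Thus $\sigma\in\Sigma_f$ is a nontrivial linear element and $\Sigma^*_f\ne\{\mathrm{id}\}$.

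For the ``only if'' direction, assume $\Sigma^*_f$ is nontrivial; we may assume $d=\deg f\ge 2$ (the degree-one case being trivial), and since $f$ is monic and centered, a nontrivial element of $\Sigma^*_f$ is a rotation $\sigma(z)=\zeta z$ with $\zeta\ne 1$, say of exact order $N\ge 2$, commuting with $f^m$ for some $m$. Let $\Psi$ be the Böttcher coordinate of $f$ near $\infty$, tangent to the identity and satisfying $\Psi(z^d)=f(\Psi(z))$, hence also $\Psi(z^{d^m})=f^m(\Psi(z))$. Then $\tilde\sigma:=\Psi^{-1}\circ\sigma\circ\Psi$ is holomorphic near $\infty$, tangent to $z\mapsto\zeta z$ there, and commutes with $z\mapsto z^{d^m}$. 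Comparing Laurent coefficients in $\tilde\sigma(z^{d^m})=\tilde\sigma(z)^{d^m}$ from the top down — equivalently, invoking the uniqueness of Böttcher coordinates — forces $\tilde\sigma(z)=\zeta z$ identically, and in particular $\zeta^{d^m-1}=1$, so $\gcd(d,N)=1$. Consequently $\Psi(\zeta z)=\zeta\Psi(z)$, which says exactly that every exponent appearing in the Laurent expansion of $\Psi$ at $\infty$ is $\equiv 1\pmod N$.

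The final step transfers this congruence to $f$ itself. In $\Psi(z^d)=f(\Psi(z))$ the left side has all Laurent exponents $\equiv d\pmod N$, while each $\Psi(z)^j$ has all its exponents $\equiv j\pmod N$ and leading term $z^j$. If $f$ had an exponent not $\equiv d\pmod N$, let $j_0$ be the largest such; no exponent $j>j_0$ of $f$ can be $\equiv j_0\pmod N$ (it would also fail to be $\equiv d$), so in $f(\Psi(z))=\sum_j a_j\Psi(z)^j$ the coefficient of $z^{j_0}$ is just $a_{j_0}\ne 0$, an exponent $\not\equiv d\pmod N$, contradicting the left side. Hence every exponent of $f$ is $\equiv d\pmod N$, so $f(z)=z^sR(z^N)$ with $s$ the smallest exponent of $f$; since $s\equiv d\pmod N$ and $\gcd(d,N)=1$ we get $(s,N)=1$ with $N>1$, the asserted form.

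The main obstacle is the middle step: rigidly identifying $\tilde\sigma$ with the rotation $z\mapsto\zeta z$ (a Laurent-series rigidity statement at the superattracting fixed point $\infty$) and then reading off from $\Psi(z^d)=f(\Psi(z))$ that the exponents of $f$ lie in a single residue class mod $N$ — both elementary but requiring careful series bookkeeping. An alternative would be to apply Ritt's theorem on commuting polynomials (quoted above) to $\sigma$ and $f^m$, but this forces one to treat separately the degenerate cases where $f$ is conjugate to a power map or a Chebyshev polynomial, as well as the case $N=2$ where $\sigma(z)=-z$ is itself of Chebyshev type; the Böttcher approach handles all cases uniformly.
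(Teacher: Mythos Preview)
Your argument is correct. The ``if'' direction matches the paper's proof essentially verbatim. For the ``only if'' direction, however, you take a genuinely different route: the paper first observes that a rotation $\zeta$ commuting with $f^p$ forces $f^p(z)=zR(z^n)$, and then invokes an external decomposition lemma (Lemma~3.11 of Zieve--M\"uller, \cite{ZM08}) to pull this structure back from $f^p$ to $f$ itself, concluding $f(z)=z^s\tilde R(z^n)$ after using that $f$ is monic and centered. You instead conjugate $\sigma$ through the B\"ottcher coordinate $\Psi$ at $\infty$, use the rigidity of germs commuting with $z\mapsto z^{d^m}$ to identify $\Psi^{-1}\sigma\Psi$ with the linear rotation, read off both $\gcd(d,N)=1$ and the congruence on the exponents of $\Psi$, and then push this congruence through $\Psi(z^d)=f(\Psi(z))$ by a clean maximal-exponent argument. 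Your approach is fully self-contained and handles all cases uniformly (including the power-map and Chebyshev cases, as you note), whereas the paper's proof is shorter but outsources the main step to the Ritt/Zieve--M\"uller theory on polynomial compositions. Either way the $(s,n)=1$ condition falls out of the same congruence $s\equiv d\pmod N$ together with $\gcd(d,N)=1$.
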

\begin{proof}
First note that $f^p$ is again a monic, centered polynomial.
Since $\Sigma^*_f$ is not trivial, there exist $p$ and an $n$-th root of unity $\zeta_n \neq 1$ which commutes with $f^p$.
Hence
$$
f^p(z) = z R(z^n)
$$
By Lemma 3.11 in \cite{ZM08}, this implies that $f(z) = z^s \tilde R(z^n) \circ l$ for some polynomial $\tilde R$ and linear map $l$. Note that $(s,n) = 1$ by degree consideration.
Since $f$ is monic and centered, $l$ is the identity, so we proved one direction.

Conversely, we note that $f^p(z) = z^{s^p} P(z^n)$ for some polynomial $P$. If $(s,n) = 1$, there exists a $p$ such that $s^p \equiv1 \mod n$, so we can write $f^p(z) = z \tilde R(z^n)$ for some polynomial $\tilde R$. Hence $\Sigma^*_f$ is not trivial.
\end{proof}

\section{Proof of Theroem \ref{SymJulia}} \label{pf}
In this section, we will prove the theorem about the local symmetries of the Julia sets.
For readers' convenience, we restate the Theorem here:
{
\renewcommand{\thetheorem}{\ref{SymJulia}}
\begin{theorem}
Let $f_1(z)$ and $f_2(z)$ be two monic polynomials of degree $d$ with connected and locally connected Julia set $J_1$ and $J_2$, and trivial linear symmetry groups $\Sigma^*_{f_1}= \Sigma^*_{f_2} = \{id\}$. Let
$$
S: (U_1, U_1\cap J_1) \longrightarrow (U_2, U_2\cap J_2)
$$
be a local conformal symmetry of the Julia sets with $U_1\cap J_1 \neq \emptyset$. Then either
\begin{enumerate}
\item The Julia sets are smooth, in which case $f_1(z), f_2(z)$ are conjugate to either monomial $z^d$ (which is eliminated as we assume the linear symmetry group is trivial) or $\pm$ Chebyshev polynomials $\pm T_d$,
\item The Julia sets are not smooth, in which case there exists two polynomials $P(z)$ and $Q(z)$ such 
that 
$$
S(z) = (f_2^{-n_2}\circ f_2^{m_2}) \circ P\circ (f_1^{-n_1}\circ f_1^{m_1}) (z)
$$ 
for some $n_i$ and $m_i$, and $f_1(z) = Q\circ P(z)$, $f_2(z) = P\circ Q(z)$.
\end{enumerate}
\end{theorem}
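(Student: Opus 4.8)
The plan is to treat the two alternatives separately and, in the non-smooth case, to upgrade the germ $S$ to a global polynomial semiconjugacy by spreading it with the dynamics, exploiting the exact self-similarity of a Julia set near a repelling cycle supplied by linearization. \emph{Smooth case.} If $J_1$ is a smooth $1$-manifold (a circle or an arc), then by the classical characterization of polynomials with smooth Julia sets $f_1$ is affinely conjugate to $z^d$ or to $\pm T_d$; since $\Sigma^*_{f_1}=\{\mathrm{id}\}$, the monomial $z^d$ is excluded (it commutes with $z\mapsto \zeta z$, $\zeta^{d-1}=1$) and, when $d$ is odd, so is $\pm T_d$ (it is odd, hence commutes with $z\mapsto -z$). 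A conformal map cannot carry a smooth sub-arc of $J_1$ onto a non-smooth subset of $J_2$, so $J_2$ is smooth as well and the same dichotomy applies to $f_2$; this is alternative (1).

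\emph{Non-smooth case: setup.} Now neither Julia set is $z^d$ nor $\pm T_d$, so Corollary \ref{dense} shows the invariant laminations of $f_1$ and $f_2$ have dense gaps; hence the branch-type points of $J_1$ and $J_2$ (images of vertices of non-degenerate gaps, near which the Julia set is a finite union of non-smooth arcs), together with the repelling periodic points, are dense in each Julia set, and the former feature, with its valence, is preserved by any conformal map. Fix a repelling periodic point $x_1\in U_1\cap J_1$ of period $n_1$ and multiplier $\lambda_1=(f_1^{n_1})'(x_1)$. In the Koenigs coordinate the germ of $J_1$ at $x_1$ is conformally isomorphic to the germ at $0$ of a set $\mathcal{L}_1$ that is \emph{exactly} $\lambda_1$-self-similar (this is the Julia-set content of linearization, as in Tan Lei's Theorem, with the limit model literally self-similar), and $\mathcal{L}_1$ is non-degenerate since $J_1$ is non-smooth. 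Pushing forward by $S$ and invoking Lemma \ref{limitmodelsimilar}, the germ of $J_2$ at $y_1:=S(x_1)$ is asymptotically $\lambda_1$-self-similar with non-degenerate limit model.

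\emph{Globalization.} The heart of the argument consists of the following steps. (i) Show $y_1$ is preperiodic for $f_2$: a point of a polynomial Julia set whose germ is asymptotically $\mu$-self-similar with $|\mu|>1$ and non-degenerate limit model must be preperiodic, since parabolic points and boundary points of rotation domains carry a different, non-linear asymptotic geometry (here non-smoothness and density of branch points are used again). (ii) After post-composing $S$ with an iterate of $f_2$ and pre-/post-composing with suitable inverse branches of iterates, arrange that $S$ carries the repelling cycle through $x_1$ onto a repelling cycle of $f_2$ through a point $x_2$ with multiplier $\lambda_2$, and compare the two exact self-similarities of the now-matching germs to deduce $\lambda_1^{a}=\lambda_2^{b}$ for some positive integers $a,b$. (iii) Passing to common iterates $f_1^{N},f_2^{N}$, one may then assume the self-similar structures of $\mathcal{L}_1$ and $\mathcal{L}_2$ coincide exactly, so that applying $f_1^{N}$ and pulling back by the appropriate branch of $f_2^{-N}$ spreads $S$ outward to a conformal map $\Phi$ intertwining iterates of $f_1$ and $f_2$ on a neighborhood of $J_1$, with $\Phi(J_1\cap\,\cdot)=J_2\cap\,\cdot$. (iv) The Julia sets being locally connected, hence removable for the relevant function class, $\Phi$ matches up B\"ottcher coordinates and extends to a proper holomorphic self-map of $\C$, i.e.\ a polynomial $P$, with $P\circ f_1=f_2\circ P$ after adjusting by finitely many iterates. (v) Ritt's decomposition theory applied to this functional equation (with $\deg f_1=\deg f_2$) yields polynomials $P,Q$ with $f_1=Q\circ P$ and $f_2=P\circ Q$. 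Unwinding the normalizations from (ii)--(iii) — each a composition of $S$ with an inverse branch of an iterate of $f_1$ on the source and of $f_2$ on the target — produces exactly $S(z)=(f_2^{-n_2}\circ f_2^{m_2})\circ P\circ(f_1^{-n_1}\circ f_1^{m_1})(z)$.

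\emph{Main obstacle.} The delicate part is (i)--(iii). A priori $S(x_1)$ need not be periodic for $f_2$, and a naive Baire-category attempt to find a point simultaneously periodic for $f_1$ and mapped by $S$ to a periodic point of $f_2$ fails, since both relevant dense sets are merely countable; one must instead use the rigidity of asymptotic self-similarity for polynomial Julia sets — the limit model remembers the multiplier, so the zoom-ins of $J_2$ at $y_1$ genuinely converge — to force $y_1$ into the preperiodic set and to match the multiplier data. Moreover, because $S$ is only a germ-of-Julia-set conjugacy rather than a dynamical conjugacy, the outward spreading in (iii) must be driven by the \emph{exact} self-similarity of the linearized models, not by any conjugacy relation. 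A secondary subtlety is the removability/polynomial-extension step (iv) together with the need to land, via the decomposition theory of \cite{R22}, \cite{R23}, \cite{GTZ08}, in the cyclic form $f_1=Q\circ P$, $f_2=P\circ Q$ rather than in a bare semiconjugacy.
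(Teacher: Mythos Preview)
Your route through Koenigs linearization and asymptotic self-similarity is genuinely different from the paper's, which never touches multipliers or limit models at all. The paper works entirely on the boundary circle via B\"ottcher: it defines the \emph{associated circle map} $s:V_1\to V_2$ obtained by conjugating $S$ through the B\"ottcher coordinates and reflecting across $S^1$, then proves (Lemma~\ref{RatDer}) that $s(t)=at+b$ is affine with $a,b\in\Q$, using the combinatorics of gaps in the invariant lamination to force discreteness. The trivial linear symmetry hypothesis enters in Lemma~\ref{lem:int} to pin down $a=l\mid d$ after normalization, and then Lemma~\ref{ext} promotes $S$ to a global polynomial $P$ of degree $l$ with $P\circ f_1=f_2\circ P$. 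The decomposition $f_1=Q\circ P$, $f_2=P\circ Q$ is obtained not from Ritt's theory but directly: one runs the same argument on the normal form $m_d\circ s^{-1}$ of the inverse circle map to produce $Q$, and the identity $(m_d\circ s^{-1})\circ s=m_d$ on the circle literally becomes $Q\circ P=f_1$.

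Your outline has two substantive gaps. First, step (i): the claim that a Julia-set point with an asymptotically $\mu$-self-similar germ ($|\mu|>1$, non-degenerate model) must be preperiodic is not a standard result and is not proved here; you yourself flag it as the ``main obstacle'' but do not resolve it. Infinitely renormalizable maps and Siegel boundaries already show that non-preperiodic points can carry rich scaling behavior, so a careful argument is required, and the paper simply sidesteps this by reading preperiodicity off the circle map (rational $a,b$). Second, step (v): a bare semiconjugacy $P\circ f_1=f_2\circ P$ together with $\deg f_1=\deg f_2$ does \emph{not} feed into Ritt's theorems to produce $f_1=Q\circ P$, $f_2=P\circ Q$; you have no control on $\deg P$ (it could be $d^k$, e.g.\ $P=f_2^k$ when $f_1=f_2$), and the cyclic factorization in the paper comes from the two-sided circle-map construction, not from decomposition theory. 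Relatedly, your sketch never makes essential use of $\Sigma^*_{f_i}=\{\mathrm{id}\}$, whereas in the paper this hypothesis is exactly what forces the slope $u/v$ of the normalized circle map to be $1$, hence $\deg P\mid d$.
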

\addtocounter{theorem}{-1}
}

\begin{rmk}\label{SJ}

\begin{enumerate}
\item Note that if the Julia set contains a smooth arc (meaning the tangent line exists at every point), then it is a smooth curve. It is known that if the Julia set is a smooth curve, then the Julia set is contained in a circle (\cite{F19} section 43), hence they are either monomial or $\pm$ Chebyshev polynomials (\cite{B91} Chapter 1).

\item In \cite{McM85} (Proposition 4.5), McMullen proves the case when $f_1(z)$ is in the main cardioid of the Mandelbrot set, (in which case $f_2(z)$ is also in the main cardioid). The strategy of our proof is similar. In the general case, we need to consider some combinatorics of Julia sets.

\item The problem of when two polynomials or rational maps have the same Julia sets has been studied extensively, see \cite{BE87} \cite{B90} \cite{L90} \cite{LP97}.
\end{enumerate}
\end{rmk}

We will now consider the case when the Julia set of $f_1(z)$ is not smooth. 
The proof consists of several lemmas.

First note that we can compose $S: (U_1, U_1\cap J_1) \longrightarrow (U_2, U_2\cap J_2)$ with the B\"ottcher maps on both sides, and get a map from open sets of the exterior of the unit disk to some other open sets. By Schwarz reflection principle, the map extends to some real analytic maps on the circle
$$
s: V_1\subset S^1 \longrightarrow V_2\subset S^1
$$
We will call $s$ the {\em associated circle map} of the local symmetry $S$.
We will also identify $S^1\cong \R/\Z$, i.e., identify a point in $S^1$ with a real number $\mod \Z$.

In general, the open sets $V_1$ and $V_2$ may be disconnected. If that's the case, we will choose a connected component and assume $V_1$ and $V_2$ are connected.

The idea is to understand how the associated circle map interacts with the original dynamics $m_d$ on the circle. Any composition of $s$ with the dynamics $m_d$ will descend to local conformal symmetries on different patches of the Julia set, and the circle is mostly used to clarify the combinatorics. 

The proof of the following lemma is based on the idea of that the symmetry of Julia set of $f_1$ and $f_2$ is discrete. The argument is classical, see \cite{L90} and \cite{LP97} for comparison.

\begin{lem}\label{RatDer}
Let $f_1(z)$ and $f_2(z)$ be two monic polynomials of degree $d$ with connected and locally connected non-smooth Julia set $J_1$ and $J_2$, and $S:U_1\longrightarrow U_2$ be a local conformal symmetry of the Julia sets.
Let $s:V_1 \longrightarrow V_2$ be the associated circle map of $S$, then
$$
s(t) = at+b.
$$
Moreover if $x\in U$ be a periodic point under $m_d(t) = dt$, then $s(x)$ is preperiodic, and hence $a,b\in \Q$.
\end{lem}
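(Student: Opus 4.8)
The plan is to show that the associated circle map $s$, a priori only real-analytic, is actually affine, and then to pin down its coefficients using the dynamics. The key structural input is that a non-smooth Julia set has essentially no ``room'' for conformal deformations: by Corollary \ref{dense}, the invariant geodesic lamination of $f_i$ has dense gaps in $\Delta$ (the non-smooth hypothesis rules out $z^d$ and $\pm T_d$), so the complementary gaps of the Julia set — i.e. the bounded Fatou components, or the branch-point structure of the dendrite — are dense along $J_i$. A local conformal symmetry must send gaps to gaps of the same ``type'' (same number of sides, same combinatorial rotation number), and there are only countably many of each type meeting a given compact arc. First I would make this precise on the circle side: the endpoints of leaves of the lamination accumulate everywhere on $V_1$, and $s$ must carry the set of endpoints-of-leaves-of-$L_1$ in $V_1$ onto the corresponding set for $L_2$ in $V_2$, respecting the combinatorial data of each leaf/gap.

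Next I would exploit rigidity. Consider $s$ together with its ``partners'' obtained by pre- and post-composing with branches of $m_d^{\pm k}$: each such composition is again an associated circle map of a local conformal symmetry (of $J_1$ with $J_2$, or of $J_i$ with itself) on a possibly different patch. Using the density of gaps, one argues that the family of all these maps is discrete in an appropriate sense — two such maps agreeing to first order at a point must agree — which is the classical ``no continuous family of symmetries'' phenomenon in \cite{L90}, \cite{LP97}. Concretely, I would show: if $t_0 \in V_1$ is an endpoint of a leaf of $L_1$, then $s(t_0)$ is an endpoint of a leaf of $L_2$, and the same holds for all iterated preimages $m_d^{-k}$ of $t_0$; these preimages are dense in a neighborhood. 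The real-analytic map $s$ then conjugates, on a dense set, the restriction of the $m_d$-preimage structure of $L_1$ to that of $L_2$. From the self-similar (in fact self-affine, since $m_d$ is $t \mapsto dt$) nature of this structure, $s$ must intertwine the local scaling by powers of $d$, which forces $s'$ to be locally constant, hence $s(t) = at + b$.

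Once $s$ is affine, the second assertion is a short dynamical argument. If $x \in V_1$ is periodic under $m_d$, say $m_d^p(x) = x$, then I want to track the orbit of $s(x)$. Post-composing $s$ with the branch of $m_d^{-p}$ fixing... more carefully: the map $t \mapsto s(m_d^p(s^{-1}(t)))$ is again an associated circle map near $s(x)$, it is affine (same argument, or by direct computation $a(d^p \cdot \frac{t-b}{a}) + b = d^p t + b(1-d^p)$), and it fixes $s(x)$; but an affine map of the form $t \mapsto d^p t + c$ has a unique fixed point and expands by $d^p > 1$, so combined with the discreteness of the symmetry family this map must be an iterate of the dynamics up to the lamination's combinatorics, forcing $s(x)$ to lie in the (countable) set of $m_d$-preperiodic points. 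From $s(x) = ax + b$ with $x$ rational (periodic points of $m_d$ are rational) and $s(x)$ rational for at least two independent choices of periodic $x$ in $V_1$, we solve a linear system to conclude $a, b \in \Q$.

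The main obstacle I expect is the rigidity step — upgrading ``$s$ is real-analytic and respects the dense gap structure'' to ``$s$ is affine.'' The density of gaps (Corollary \ref{dense}) gives a dense set of constraints, but one must rule out that a non-affine analytic $s$ could still match up the combinatorics; this is where the self-affine structure of $L$ under $m_d$ (every small piece of $L_1$ is, after scaling by some $d^k$, a piece of $L_1$ again) must be used to show that matching the combinatorics near a point and near its $m_d$-preimages is only possible if the derivative is constant. Making the ``discreteness of the symmetry family'' rigorous — that two associated circle maps agreeing to order one at a point coincide — is the technical heart, and it is exactly the point where the references \cite{L90} and \cite{LP97} are invoked.
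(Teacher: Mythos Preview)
Your outline captures the right ingredients in the non-Jordan-curve case, but it has a genuine gap and is missing the concrete mechanism that the paper uses to make the discreteness argument bite.

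First, the gap: your entire linearity argument rests on density of gaps of the lamination (Corollary \ref{dense}), but when $J_1$ is a Jordan curve the lamination is \emph{trivial} --- there are no leaves and no finite gaps to match up. A non-smooth Jordan-curve Julia set (e.g.\ $z^2+c$ for small $c\neq 0$) is allowed by the hypotheses, and for such $f$ there is no branch structure or bounded Fatou component structure dense along $J$ to exploit. The paper treats this case separately: one finds a repelling periodic point with non-real multiplier (\cite{ES11}), near which $J$ looks like a logarithmic spiral, and uses that to show any symmetry sufficiently close to the identity \emph{is} the identity. You need some replacement for this.

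Second, even in the non-trivial-lamination case, ``the self-affine structure forces $s'$ constant'' is the right intuition but not yet a proof. The paper's device is to fix a periodic point $x$ of $m_d$ of period $p$, set
\[
s_n \;=\; m_d^{np}\circ s\circ m_d^{-np}
\]
(with the branch fixing $x$), and observe that $s_n'(x)=s'(x)$ for all $n$ while $s_n$ converges along a subsequence to the linear map $t\mapsto s'(x)\,t + B$. Discreteness (via gaps, or via the spiral argument) then gives $s_{n_k}=s_{n_{k+1}}$ for large $k$, which simultaneously yields (i) $s$ is linear, since $s_{n_k}$ is a linear conjugate of $s$, and (ii) $m_d^{n_kp}(s(x))=m_d^{n_{k+1}p}(s(x))$, i.e.\ $s(x)$ is preperiodic. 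Your route to preperiodicity --- that $t\mapsto s\circ m_d^p\circ s^{-1}(t)=d^pt+b(1-d^p)$ ``must be an iterate of the dynamics'' --- does not follow from anything you have established; every map $t\mapsto d^pt+c$ is a legitimate circle map with a rational fixed point regardless of whether $s(x)$ is preperiodic, so this computation alone proves nothing. The stabilization $s_{n_k}=s_{n_{k+1}}$ is what actually pins down the orbit of $s(x)$.
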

\begin{proof}

Let $m_d(t) = dt$ be the multiplication by $d$ on $S^1$, and let $x\in V_1$ be a periodic point under doubling with period $p$. 
Let $m_d^{-p}$ be the inverse branch defined on $V_1$ with $m_d^{-p}(x) = x$. We will consider the following map
$$
s_n (t) = m_d^{np} \circ s \circ (m_d^{-p})^n(t)
$$

Passing to a subsequence $s_{n_k}$, we have
$$
s_\infty(t) := \lim_{k\to\infty} s_{n_k}(t) = At+B
$$
where $A = s'(x)$.

We claim that $s_{n_k}(t) = At+B$ for all sufficiently large $k$.

First we consider the case when the Julia set is a Jordan curve.
Consider the sequence $u_{n_k} = s_{n_k}^{-1}\circ s_\infty$ which converges to the identity, and let $U_{n_k}$ be the associated local conformal symmetries.
Now composing with the dynamics if necessary, we can assume there is a repelling periodic point in the domain of $U_{n_k}$ with non-real multiplier. Such a point exists as otherwise the Julia set will be contained in a circle (\cite{ES11}), which is a contradiction to our assumption that the Julia set is not smooth.
Near the repelling periodic point, the Julia set looks like logarithmic spiral.
It is not hard to see that any map sufficiently close to identity has to fix this repelling periodic point.
Similar argument also works for all pre-images of this periodic point.
This forces the map to be identity.
Therefore, $s$ is linear, and for all large $k$, 
$$
s_{n_k}(t) = At+B.
$$

In the case when Julia set is not a Jordan curve, similar argument will also work. 
Here we provide a different and more combinatorial argument using laminations.
Since the Julia set is not a Jordan curve, the invariant lamination associated to it is not trivial.
Since we also assume the Julia set is not smooth, the gaps for the invariant lamination are dense in $\Delta$ by Corollary \ref{dense}.
If we consider a leaf with two end points in $V_1$, then it bounds some gap with all boundary points in $V_1$ by the density of the gaps.
Let $G$ be such a gap, then $s_n$ sends $G$ to some other gap.
For all large $k$, the image $s_{n_k}(G)$ is very close to $s_\infty(G)$.
Since the diameter $s_\infty(G)$ is bounded below, this forces $s_{n_k}(G) = s_\infty(G)$ for all large $k$ as the leaves are pairwise unlinked.
Therefore, the restriction of $s$ is linear on the boundary points of the gap $G$. This holds true for all gaps $G$ with boundary points in $V_1$, and $s$ is real analytic, we conclude that $s$ is linear, and for all large $k$, 
$$
s_{n_k}(t) = At+B.
$$

For the moreover part, we note that for all large $k$,
$$
s_{n_k}(x) = Ax+B,
$$
which means 
\begin{align*}
m_d^{n_kp} \circ s(x) &=  m_d^{n_kp} \circ s \circ (m_d^{-p})^{n_k}(x) \\
&= s_{n_k}(x)  = s_{n_{k+1}}(x) \\
&= m_d^{n_{k+1}p} \circ s(x) = m_d^{(n_{k+1}-n_k)p} (m_d^{n_kp} \circ s(x))
\end{align*} 
So $m_d^{n_kp} \circ s(x)$ is periodic, and 
hence, $s(x)$ is preperiodic.

The fact that $a,b\in \Q$ now follows immediately from the fact $s$ sends periodic points to the preperiodic points.
\end{proof}

\begin{lem}\label{ext}
Let $f_1(z)$ and $f_2(z)$ be two monic polynomials of degree $d$ with connected and locally connected non-smooth Julia set $J_1$ and $J_2$, and $S:U_1\longrightarrow U_2$ be a local conformal symmetry of the Julia sets.
Let 
\begin{align*}
s:V_1 &\longrightarrow V_2\\
t&\mapsto at+b.
\end{align*}
be the associated circle map of the local symmetry $S$. Let $x \in U$ be a periodic point of $m_d$ of period $p$.

If $a$ is a positive integer, and $s(x)$ is also a periodic point of $m_d$ of period $p$, then the local symmetry $S$ extends to a polynomial of degree $a$ and $S\circ f_1^p = f_2^p \circ S$.
\end{lem}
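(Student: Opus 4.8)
The plan is to recognise $S$, in B\"ottcher coordinates, as a monomial, to read off from this a functional equation relating $S$ to $f_1^p$ and $f_2^p$, and then to globalise using the combinatorics of the Julia sets. Since $a$ is a positive integer, the affine circle map $s(t)=at+b$ becomes, in the multiplicative coordinate $w=\me^{2\pi i t}$, the monomial $w\mapsto\beta w^{a}$ with $\beta=\me^{2\pi i b}$; the holomorphic extension of $\phi_2^{-1}\circ S\circ\phi_1$ across $S^1$ (via the Schwarz reflection principle) agrees with $\beta w^{a}$ on an arc of $S^1$, hence equals $\beta w^{a}$ on its whole connected domain by the identity theorem. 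Consequently $S$ coincides, on the part of $U_1$ lying in $\C\setminus K_1$, with the globally defined holomorphic proper degree-$a$ map $\tilde S:=\phi_2\circ\bigl(\beta(\,\cdot\,)^{a}\bigr)\circ\phi_1^{-1}\colon\C\setminus K_1\to\C\setminus K_2$.

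Next I would extract the functional equation. As $x$ is $m_d$-periodic of period $p$, $(d^{p}-1)x\in\Z$; as $s(x)=ax+b$ is $m_d$-periodic of period $p$, $(d^{p}-1)(ax+b)\in\Z$; and $a\in\Z$. Subtracting gives $(d^{p}-1)b\in\Z$, i.e.\ $\beta^{d^{p}}=\beta$. Using $\phi_i(w^{d})=f_i(\phi_i(w))$, so that $f_i^{p}(\phi_i(w))=\phi_i(w^{d^{p}})$, one computes for $z=\phi_1(w)$ that $\tilde S(f_1^{p}(z))=\phi_2(\beta w^{ad^{p}})$ whereas $f_2^{p}(\tilde S(z))=\phi_2(\beta^{d^{p}}w^{ad^{p}})$; these agree exactly because $\beta^{d^{p}}=\beta$, so $\tilde S\circ f_1^{p}=f_2^{p}\circ\tilde S$ on $\C\setminus K_1$. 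Since $S=\tilde S$ on the nonempty open set $U_1\cap(\C\setminus K_1)$, the two maps patch to a single holomorphic map on the connected open set $U_1\cup(\C\setminus K_1)$ satisfying the same identity; in particular $S\circ f_1^{p}=f_2^{p}\circ S$ near $U_1\cap J_1$.

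Finally I would promote $\tilde S$ to a genuine polynomial of degree $a$. One first extends $\tilde S$ continuously across $J_1$. At $\zeta\in U_1\cap J_1$ this is automatic: $S$ is a homeomorphism, so it carries the external rays of $f_1$ landing at $\zeta$ onto the external rays of $f_2$ landing at $S(\zeta)$, i.e.\ $s$ maps the angle class of $\zeta$ into that of $S(\zeta)$. To reach the other points one shows that $s$ sends the vertices of every gap of the lamination $L_1$ into a single class of $\sim_{2}$, and likewise the two endpoints of every leaf: this is known for small gaps near $V_1$, whose landing points lie in $U_1$, and it propagates to an arbitrary gap $G'$ by writing $G'=m_d^{np}(G)$ for a suitable small preimage gap $G$ near $V_1$ (backward images of a fixed gap being dense), using that $s$ conjugates $m_d^{p}$ to itself and that $\sim_{2}$ is forward invariant; the leaf case then follows by approximation, invoking Corollary \ref{dense} (dense gaps, since $J_1$ is not smooth) and (GL2). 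This continuously extends $\tilde S$ over $J_1$ with image in $J_2$, and one fills $\tilde S$ in over each bounded Fatou component $D$ of $f_1$ --- a Jordan domain by local connectivity --- from the boundary values $\tilde S|_{\partial D}$, which parametrise the boundary of a Fatou component of $f_2$, via the Carath\'eodory extensions of the Riemann maps of $D$ and of its image. The resulting entire map is proper of degree $a$, hence a polynomial $P$ of degree $a$; it satisfies $P\circ f_1^{p}=f_2^{p}\circ P$ as an identity of polynomials, and $S=P$ on $U_1$, which is the assertion.

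The first two steps are forced cheaply, through the arithmetic identity $\beta^{d^{p}}=\beta$; \emph{the main obstacle is the final step}, namely extending $\tilde S$ holomorphically across all of $K_1$. Its two delicate parts are propagating the lamination-compatibility of $s$ from $V_1$ to the whole of $L_1$ --- where the non-smoothness of $J_1$, hence the nontriviality and density of gaps of $L_1$, supplies the needed rigidity, in the same spirit as the logarithmic-spiral rigidity used in Lemma \ref{RatDer} --- and filling $\tilde S$ in over the bounded Fatou components. A holomorphic extension merely to a neighbourhood of $J_1$ could already be obtained from the standard repelling-periodic-point blow-up argument; it is the passage to a bona fide global polynomial that requires the extra work.
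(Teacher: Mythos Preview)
Your first two steps --- recognising $\tilde S=\phi_2\circ(\beta w^{a})\circ\phi_1^{-1}$ on $\C\setminus K_1$ and deducing the conjugacy $\tilde S\circ f_1^{p}=f_2^{p}\circ\tilde S$ there from the arithmetic identity $\beta^{d^{p}}=\beta$ --- are correct, and in fact a cleaner packaging of the same facts the paper establishes at the start of its proof.

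The gap is in the Fatou filling. A continuous map $\partial D\to\partial D'$ between two Jordan curves is \emph{not} in general the boundary values of a holomorphic proper map $D\to D'$: after uniformisation only finite Blaschke products arise that way, and nothing in your argument forces the induced circle map to be of this type. Even when such an interior map exists, there is no reason it should glue holomorphically with the $\tilde S$ you already have on the collar $D\cap N(J_1)$; a bounded holomorphic function on an annulus need not extend across the inner circle. Your lamination-propagation yields at best a continuous extension of $\tilde S$ to $J_1$, which is insufficient (and even in the dendrite case $K_1=J_1$, continuity across $J_1$ does not by itself force holomorphy unless one knows $J_1$ is removable). The paper's route is different and does not pass through boundary values at all: it \emph{defines} the extension dynamically by $S(z):=f_2^{Np}\bigl(S(y)\bigr)$ for any $y\in U_1$ with $f_1^{Np}(y)=z$, choosing $N$ so that $f_1^{Np}(U_1)\supset K_1$, and then checks well-definedness by connecting $z$ to the basin of infinity along an arc, lifting through $y$, and comparing to the already-known $\varphi=\tilde S$. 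This is precisely the ``blow-up'' mechanism you mention, but carried all the way into the bounded Fatou components rather than stopped at $N(J_1)$; the interior case uses that backward orbits in a bounded (super)attracting or parabolic basin eventually enter any fixed neighbourhood of $J_1$.
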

The strategy is that if $a$ is an integer, the symmetry $S$ has an analytic continuation to $\C-K_1$, where $K_1$ is the filled Julia set of $f_1$. If $s(x)$ is also periodic of period $p$, then $S$ is a local conjugacy between $f_1^p$ and $f_2^p$. Now composing with dynamics and the expanding property of Julia set, one can extend $S$ to an analytic map on $\C$ and get a conjugacy between $f_1^p$ and $f_2^p$. The map is a polynomial as it only has a pole at infinity.

\begin{proof}
Let $z_1 \in J_1$ be the landing point of the ray $\mathcal{R}^x$.
Note that $z_1$ and $S(z_1)$ are both periodic, with periods dividing $p$. They are either repelling or parabolic periodic points.

Note that if there are more than $1$ external rays landing at $z_1$, a neighborhood of $z_1$ contains external angles outside of $U$ (which we choose to be a connected component). 
A priori, in different component, the associated circle map may have different derivative or translation distance. 
To deal with this situation, let $L$ be a leaf of the invariant lamination of $f_1$ in $V_1$, and $\tilde V_1 \subset S^1$ be the side of the $L$ which entirely contained in $V_1$. 
Let $\tilde{x}$ be a point in $\tilde V_1$ which eventually maps to the periodic orbit of $x$, and denote its landing point as $\tilde{z}_1$.
Then all the landing angles of $\tilde{z}_1$ are contained in $\tilde V_1$ as the leaves are pairwise unlinked.
Now using the dynamics $f_1$ to move $\tilde x$ to $x$, and the dynamics $f_2$ to control the derivative, we can assume that the associated circle map is of the form $s(t) = at+b$ for all connected neighborhood in $S^1$ of landing angles of $z_1$.

Since the associated circle map is $s(t) = at+b$, we know outside the filled Julia set $K_1$, $S|_{U_1-K_1}$ is the restriction of the map
$$
\varphi:= \Psi_2 \circ R \circ \Psi_1^{-1}
$$
where $\Psi_j :\C-\Delta \longrightarrow \C-K_j$ is the B\"ottcher map of $f_j$, and 
$$
R(z) = e^{2\pi bi} z^a.
$$

Let $m_d^{-p}$ be the inverse branch defined on $V_1$ with $m_d^{-p}(x) = x$, and since $s(x)$ has period $p$ as well, we have
$$
s(x) = m_d^{p} \circ s \circ (m_d^{-p})(x)
$$
Hence, $s(t) = m_d^{p} \circ s \circ (m_d^{-p})(t)$ for all $t\in V_1$ as the maps on both sides are linear with the same derivative and agree at a point. Therefore, the local symmetry $S$ is a conjugation between $f_1^p$ and $f_2^p$.

Hence we have the commuting diagram
\[
\begin{tikzcd}
  U_1\cup(\C-K_1) \arrow[r, "S"] \arrow[d, "f_1^p"]
    & U_2\cup (\C-K_2) \arrow[d, "f_2^p"] \\
  f_1^p(U_1)\cup(\C-K_1) \arrow[r, "S"]
&f_2^p(U_2)\cup(\C-K_2)
\end{tikzcd}
\]
We will now use dynamics to extend $S$ to $\C$.

Shrink $U_1$ if necessary, we may assume $U_1$ is a Jordan domain, $\overline{U_1} \subset f_1^p(U_1)\cup \{z_1\}$ and $f_1^p$ is an isomorphism of $U_1$ and $f_1^{p} (U_1)$ and
$$
\cap_{k\geq 0} f^{-k} (\overline{U_1}) = \{z_1\}.
$$
Since $U_1$ intersects the Julia set non-trivially, and the filled Julia set is compact, there is an $N$ such that
$$
K_1 \subset \cup_{i=0}^N f_1^{np}(U_1) = f_1^{Np}(U_1)
$$
Denote $F(z) := f_1^{Np}(z)$ and $G(z) := f_2^{Np}(z)$. 
Let $Q$ be the critical values of $F$, and $\Omega$ be the union of all Fatou sets.
Perturb $U_1$ if necessary, we may also assume that all the intersection points of $F(\partial U_1)$ consists of two transversal arcs, and $F(\partial U_1) \cap Q = \emptyset$.
Given a point $z\in F(U_1)$, we define
$$
S(z) = G \circ S(y)
$$
where $y\in U_1$ and $F(y) = z$.
We will show that $S$ is well-defined and is holomorphic.

First, note that since $S$ conjugates $f_1^p$ and $f_2^p$ on $\C-K_1$, so $S$ is well defined outside of the filled Julia set and extends to $\varphi$.

Secondly, we will show that $S$ is well defined in a neighborhood $N(J_1)$ of the Julia set.
Consider the set
\begin{align*}
W:= \{z: &\exists\text{ a simple arc } \gamma \text{ connecting } z \text{ to } J_1 \text{ with } \mathring\gamma \subset \Omega-Q \\
&\text{ such that } \forall y\in \overline{U_1} \text{ with } F(y) = z, \\
&\text{ the component of } F^{-1}(\gamma) \text{ containing } y \text{ is contained in }U_1 \\
&\text{ (except possibly at y).} \}
\end{align*}
Here $\mathring\gamma$ denotes the interior of the arc $\gamma$.

We first note that $S$ is well-defined on $W-Q$. Let $z\in W$, and fix $\gamma$ to be a simple arc as in the definition of $W$, then for any $y\in U_1$ with $F(y) = z$, let $\gamma'$ be the component of $F^{-1}(\gamma)$ through $y$.
By the definition of $\gamma$, we have $\gamma' \subset U_1$.
We may choose a tubular neighborhood $N$ of $\gamma$ and corresponding neighborhood $N'$ of $\gamma'$ and assume that $F$ gives isomorphisms between $N'$ and $N$.
Using the isomorphism, we get an analytic function $G\circ S\circ F^{-1}$ on $N$.
Note that $N$ intersects $\C-K_1$, and $G\circ S\circ F^{-1} = \varphi$ on $N-K_1$ hence
$$
G\circ S(y) = \varphi|_{N}(z)
$$
where $\varphi|_{N}(z)$ is the value at $z$ of the unique analytic extension of $\varphi$ on $N$.
Since for different preimages of $z$ in $U_1$, by definition of $W$, the same arc $\gamma$ is used to construct the analytic map, so $S$ is well-defined.

It is not hard to check that $W$ is open and contains the Julia set, hence $S$ is well defined $N(J_1)- Q$, where $N(J_1)$ is a neighborhood of the Julia set $J_1$. But note that $Q$ is finite and nearby points have bounded image, so they are removable singularities. This proves the claim.

Lastly, we will show $S$ is well-defined in bounded Fatou components. 
Let $\Omega_0$ be a Fatou component and
let $z \in \Omega_0 - Q - A$ where $A$ is the set of non-repelling periodic points.
Fix $\gamma$ to be a simple arc connecting $z$ to the Julia set, with $\mathring\gamma \subset \Omega_0$ contains no points in $A$ or post-critical points of $F$, then for any $y_0\in U_1$ with $F(y_0) = z$,
let $\gamma_0$ be the component of $F^{-1}(\gamma)$ containing $y_0$, and $\gamma_k$ be the unique component of $F^{-1}(\gamma_{k-1})$ containing $y_k \in U_1$ and $F(y_k) = y_{k-1}$ with the property that
$$
f_1^p(y_k),...,f_1^{Np} (y_k) = F(y_k) \subset U_1.
$$
Note that this implies that $G\circ S(y_k) = S(y_{k-1})$ and $y_k \to z_1$.

We claim that $\gamma_k$ eventually lies in $U_1 \cup N(J_1)$.

To see this, note that if $\gamma_k$ lies in some strictly preperiodic Fatou component for some $k$, then $\gamma_k$ eventually lies in $U_1\cup N(J_1)$ as $K_1-N(J_1)$ is compact, so it is covered by only finitely many Fatou components.

Hence, we may assume, by replacing $F$ with some iterates of $F$, that $\gamma_k$ lies a fixed Fatou component.
This Fatou component cannot be a Siegel disk, as we assume the Julia set is locally connected,
so there are no periodic points on the boundary.
Hence we may assume that the Fatou component is either parabolic or (super-)attracting.
Let $\epsilon = d(\gamma, A) > 0$, then there exists an $n$ such that
$$
F^n(\Omega_0-N(J_1)) \subset \cup_{p\in A} B(p,\epsilon)
$$
as the $F^k$ converges locally uniformly to $A$ in $\Omega_0$.
Hence, $\gamma_n \subset N(J_1)$.

Since $\mathring\gamma$ contains no postcritical points, we may choose a tubular neighborhood $N$ of $\gamma$ and corresponding neighborhood $N'$ of $\gamma_n$ and assume that $F^n$ gives isomorphisms between $N'$ and $N$.
Using the isomorphism, we get an analytic function $G^n\circ S\circ F^{-n}$ on $N$.
Note that $N$ intersects $\C-K_1$, and $G^n\circ S\circ F^{-n} = \varphi$ on $N-K_1$ hence
$$
G^n\circ S(y_n) = \varphi|_{N}(z)
$$
where $\varphi|_{N}$ is understood as the unique analytic extension of $\phi$ on $N$.
Note that by construction, $G\circ S(y_k) = S(y_{k-1})$, hence we have
$$
G\circ S(y_0) = \varphi|_{N}(z).
$$
Since for different preimages of $z$ in $U_1$, by definition of $W$, the same arc $\gamma$ is used to construct the analytic map, so $S$ is well-defined.

Note that since $Q\cup A$ is discrete and nearby points have bounded image, so they are removable singularities. This proves the claim.

Hence we get an entire function $S(z)$ conjugating $f_1^p$ and $f_2^p$. The map is a polynomial of degree $a$  as it has a single pole at infinity of order $a$.

This proves the lemma.
\end{proof}

Note that a point $\frac{p}{q}\in S^1\cong \R/\Z$ with $(p,q) = 1$ is periodic under $m_d$ if and only if $(q,d) = 1$.
Composing with dynamics, we can assume that the derivative of the associated circle map is of the form $\frac{u}{v}\cdot l$ where $(v,d) = (u,d) = 1$ and $l\mid d$.
With this normalization of the derivative, we note that $s$ sends periodic points of $m_d$ to periodic points of $m_d$ if and only if it sends one periodic point to a periodic point. 
Hence, composing with dynamics, we can also assume that $s$ sends periodic points to periodic points.

We will call $s(t) = \frac{ul}{v}  \cdot t+b$ with the above assumption the {\em normal form} of the associated circle map. 
We will also call a local conformal symmetry $S$ is in the {\em normal form} if the associated circle map is.

Note that if $l=1$, then $s^{-1}(t)$ is also in the normal form, otherwise, $m_d\circ s^{-1}$ is the normal form.

\begin{lem}\label{lem:int}
Let $f_1(z)$ and $f_2(z)$ be two monic polynomials of degree $d$ with connected and locally connected Julia set $J_1$ and $J_2$, and trivial linear symmetry groups $\Sigma^*_{f_1}= \Sigma^*_{f_2} = \{id\}$. Let $S:U_1\longrightarrow U_2$ is a local conformal symmetry of the Julia set in the normal form with the associated circle map $s:V_1\subset S^1 \longrightarrow V_2\subset S^1$ of the form 
$$
s(t) = \frac{ul}{v}\cdot t+b.
$$
Let $x\in V_1$ be a periodic point with period $p$, then its image $s(x) \in V_2$ also has period $p$.
Moreover, $v = u = 1$.
\end{lem}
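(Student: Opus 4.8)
The plan is to upgrade the local symmetry to a partial conjugacy via the renormalization used in Lemma~\ref{RatDer}, and then to use $\Sigma^*_{f_1}=\Sigma^*_{f_2}=\{\mathrm{id}\}$ to pin down both the period and the slope. Let $z_1\in J_1$ be the landing point of $\mathcal R^x$ (a repelling or parabolic periodic point of $f_1$ of period dividing $p$), let $w_0$ be the landing point of $\mathcal R^{s(x)}$, and let $p'$ be the period of $s(x)$ under $m_d$; we must show $p'=p$. Repeating Lemma~\ref{RatDer}, put $s_n=m_d^{np}\circ s\circ m_d^{-np}$ (branch fixing $x$) and $S_n=f_2^{np}\circ S\circ f_1^{-np}$ (branch fixing $z_1$). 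Affineness of $s$ gives $s_n(t)=\tfrac{ul}{v}t+\beta_n$ with $\beta_n=m_d^{np}(s(x))-\tfrac{ul}{v}x$, which is periodic in $n$ of period $\nu=p'/\gcd(p,p')$ since $s(x)$ has period $p'$. Hence along $n_k=n_0+k\nu$ the holomorphic maps $S_{n_k}$ all share one associated circle map, so by Schwarz reflection and analytic continuation they agree with a single holomorphic map $\bar S$ on a fixed neighbourhood of $z_1$, and $S_{n_{k+1}}=f_2^{\nu p}\circ S_{n_k}\circ f_1^{-\nu p}$ shows that $\bar S$ conjugates $f_1^{M}$ to $f_2^{M}$ near $z_1$, with $\bar S(z_1)=w_0$, where $M=\operatorname{lcm}(p,p')$. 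Since $\bar S$ is obtained from $S$ by composing with iterates of $f_1,f_2$, it is again in normal form with the same $u,l,v$, so conclusions about $\bar S$ transfer to $S$; the same construction applied to $S^{-1}$ produces $\bar T$ conjugating $f_2^{M}$ to $f_1^{M}$ near $w_0$, of slope $v/(ul)$.

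Next I would prove $p=p'$. The germ $\bar T\circ\bar S$ is a local self-symmetry of $J_1$ near $z_1$ conjugating $f_1^{M}$ to $f_1^{M}$, with circle map $t\mapsto t+c$ of slope $1$, and $c$ has denominator prime to $d$ (being assembled from $b$ and from denominators of periodic angles, all prime to $d$). Choosing $x$ at the outset with period $p$ divisible by the multiplicative order of $d$ modulo the denominator of $c$ — admissible, since all large periods occur in $V_1$ — the circle map of $\bar T\circ\bar S$ sends the period-$p$ angle $x$ to a period-$p$ angle, so Lemma~\ref{ext} applies: $\bar T\circ\bar S$ extends to a degree-one polynomial $\ell$ with $\ell\circ f_1^{p}=f_1^{p}\circ\ell$, hence $\ell\in\Sigma^*_{f_1}=\{\mathrm{id}\}$, so $\bar T\circ\bar S=\mathrm{id}$; symmetrically $\bar S\circ\bar T=\mathrm{id}$, so $\bar S$ and $\bar T$ are mutually inverse germs giving a \emph{biholomorphic} local conjugacy between $f_1^{M}$ at $z_1$ and $f_2^{M}$ at $w_0$. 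From this I would extract that $\bar S$ in fact conjugates $f_1^{p}$ to $f_2^{p}$: the germ $g:=\bar S\circ f_1^{p}\circ\bar S^{-1}$ fixes $w_0$ and satisfies $g^{M/p}=\bar S\circ f_1^{M}\circ\bar S^{-1}=f_2^{M}$, i.e.\ $g$ is one of the finitely many holomorphic $(M/p)$-th roots of $f_2^{M}$ at the repelling fixed point $w_0$, and comparing its multiplier $(f_1^{p})'(z_1)$ with the first-return data of $f_2$ at $w_0$ — using conformality of $\bar S$ at $z_1$ together with the fact that the combinatorial rotation numbers of the external rays of $J_1$ at $z_1$ and of $J_2$ at $w_0$ must agree — identifies $g$ with the corresponding iterate of $f_2$; then $f_1^{p}(z_1)=z_1$ forces $f_2^{p}(w_0)=w_0$, giving $p'\mid p$, and the symmetric argument gives $p\mid p'$, so $p=p'$.

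It remains to show $v=u=1$. Now $p=p'$ and $\bar S$ conjugates $f_1^{p}$ to $f_2^{p}$ near $z_1$, so every hypothesis of Lemma~\ref{ext} is in place except possibly that $a=ul/v$ be a positive integer, which in that proof is used precisely so that the exterior piece $\Psi_2\circ R\circ\Psi_1^{-1}$, $R(z)=e^{2\pi ib}z^{a}$, is single-valued on $\C-K_1$. Using the triviality of the round-trip germs $\bar T\circ\bar S$ and $\bar S\circ\bar T$ established above, one argues as in Lemma~\ref{ext} that $\bar S$ does extend by pushing forward along $f_1^{p}$ — whose forward iterates of any neighbourhood of $z_1\in J_1$ exhaust $\C$ — and this extension is single-valued only when the exterior piece is, i.e.\ only when $a\in\Z$; since $\gcd(v,ul)=1$ this gives $v=1$. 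Finally, applying the whole argument to $m_d\circ S^{-1}$, which is again in normal form and whose normal-form denominator equals $u$, gives $u=1$; so the slope equals $l\mid d$, and Lemma~\ref{ext} now applies to $S$ itself.

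I expect the period-preservation step to be the main obstacle. The renormalization only produces a conjugacy between the $M$-th iterates, and $f_1^{M}\sim f_2^{M}$ by itself does \emph{not} force the minimal periods of $z_1$ and $w_0$ to coincide — this is exactly the phenomenon behind the period-changing local symmetries $f_i=z^sR(z^{n_1})^{n_2}$ noted in Section~\ref{sec1}, which are excluded here only through $\Sigma^*_{f_1}=\Sigma^*_{f_2}=\{\mathrm{id}\}$. Getting the multiplier- and rotation-number bookkeeping right so that $g^{M/p}=f_2^{M}$ improves to an honest identification of $g$ with an iterate of $f_2$ (and the parallel upgrade of $\bar S$ to a genuine polynomial in the third paragraph) is where the real work lies.
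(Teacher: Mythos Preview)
Your approach is substantially more complicated than the paper's, and the period-preservation step (which you correctly flag as the crux) has real gaps.

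The paper's key device is a \emph{commutator}: with $q$ the period of $s(x)$, set
\[
\phi \;=\; m_d^{-q}\circ s^{-1}\circ m_d^{q}\circ s,
\]
choosing the branch of $m_d^{-q}$ so that $m_d^{-q}(x)$ lies in the $m_d$-orbit of $x$. Then $\phi$ has slope $1$ automatically, and since $m_d^{q}(s(x))=s(x)$ one gets $\phi(x)=m_d^{-q}(x)$, which is already a period-$p$ point. Thus both hypotheses of Lemma~\ref{ext} are satisfied with no preliminary choices; $\phi$ extends to a linear map in $\Sigma^*_{f_1}=\{\mathrm{id}\}$, so $\phi(x)=x$, i.e.\ $p\mid q$. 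The inverse gives $q\mid p$. For $u=v=1$ the paper simply observes that the argument applies to \emph{every} periodic point in $V_1$, and if $v>1$ one can exhibit a periodic $y$ whose image changes period.

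By contrast, your round-trip $\bar T\circ\bar S$ has slope $1$ but you do not control where it sends $x$: it sends $x$ to $x+c$, and to invoke Lemma~\ref{ext} you need $x+c$ to have period $p$. Your fix --- choose $p$ divisible by the multiplicative order of $d$ modulo the denominator of $c$ --- is circular, because $c$ is assembled from $b$ together with the angles $x$ and $s(x)$, whose denominators $d^{p}-1$ and $d^{p'}-1$ already depend on $p$ and $p'$. There is no a priori bound on $\mathrm{denom}(c)$ that is independent of the period you are trying to choose.

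The second gap is the passage from an $M$-conjugacy to a $p$-conjugacy. Knowing $g^{M/p}=f_2^{M}$ and that $g$ is a local germ at $w_0$ does not identify $g$ with an iterate of $f_2$: the multiplier relation $[(f_1^{p})'(z_1)]^{M/p}=[(f_2^{p'})'(w_0)]^{M/p'}$ gives no direct comparison between $(f_1^{p})'(z_1)$ and any power of $(f_2^{p'})'(w_0)$, and ``matching rotation numbers'' is not enough to force $g=f_2^{k}$ for some $k$. This is precisely where the trivial-linear-symmetry hypothesis must enter, and the commutator trick is what makes it enter cleanly.

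Finally, your argument for $v=1$ is inverted. Lemma~\ref{ext} builds the global extension \emph{starting from} the exterior piece $\Psi_2\circ R\circ\Psi_1^{-1}$; if $a\notin\Z$ that piece is multivalued and the construction does not get off the ground, so you cannot run the extension and then read off $a\in\Z$ from its success. The paper avoids this entirely: once periods are preserved, $v>1$ immediately produces a counterexample to period preservation at a nearby periodic angle.
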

\begin{proof}
We will first show that the image also has period $p$. Assume that the period of $s(x)$ is $q$, 
and consider the map
$$
\phi = [m_d^q, s] = m_d^{-q} \circ s^{-1}\circ m_d^{q}\circ s
$$
where $m_d^{-q}$ is chosen so that $m_d^{-q}(x)$ is in the periodic orbit of $x$.
Note that the derivative is $1$ and $x, \phi(x)$ are in the same orbit, so by Lemma \ref{ext}, $\phi$ is the associated circle map of a local conformal symmetry which extends to a linear map $\Phi(z) = Az+B$ and $\Phi\circ f_1^p(z)  =  f_1^p(z)\circ \Phi$.
Since $f_1(z)$ has trivial symmetry group, this means $\Phi(z) = z$. Therefore, $\phi(x) = x$, so $p$ divides $q$.
The same argument applies to the inverse of $s$, and we get $p=q$.

For the moreover part,
note that if $v\neq 1$, we can find a periodic point $y \neq x$ whose image has different period, which gives a contradiction.
By considering the inverse map, and put it in the normal form, and apply the same argument, we conclude that $u=1$, proving the lemma.

\end{proof}

\begin{proof}[Proof of Theorem \ref{SymJulia}]
If the Julia set is smooth, by Remark \ref{SJ}, they are conjugates of $z^d$ or $\pm T_d$ where $T_d$ is the Chebyshev polynomials.

Hence, we only need to consider when the Julia set is not smooth.

Let $s:V_1\longrightarrow V_2$ be the associated circle map of the local conformal symmetry, and let $\tilde x\in U$ be in the grand orbit of the fixed point $x$ (under $m_d$).
Composing with dynamics, we can assume that $s(t)$ is in the normal form and $x\in V_1$.
By Lemma \ref{lem:int}, $s(t) = l\cdot t+b$ where $l\mid d$ and $s(x)$ is also a fixed point. Hence, by Lemma \ref{ext}, we know $S$ extends to a polynomial of degree $l$ and $P\circ f_1 (z)= f_2 \circ P(z)$.

If $l = 1$, we conclude that $P$ gives a conjugacy between $f_1$ and $f_2$, in which case, we may take 
$$
Q(z) = f_1 \circ P^{-1}(z)
$$
and conclude the result.

If $l\neq 1$, then apply the same argument to the normal form of the inverse $m_d\circ s^{-1}$, we get a polynomial $Q$ such that 
$$
Q\circ f_2(z) = f_1 \circ Q(z)
$$
Note that $(m_d\circ s^{-1})\circ s = m_d$, so
$$
Q\circ P(z) = f_1(z)
$$

Hence, we have $f_1(z) = Q\circ P(z)$ and $f_2(z) = P\circ Q(z)$.

\end{proof}

\section{Proof of Theorem \ref{gen} and \ref{lb}} \label{pfgen}

For readers' convenience, we restate the Theorem \ref{gen} here:
{
\renewcommand{\thetheorem}{\ref{gen}}
\begin{theorem}
Let $f(z)$ be a monic polynomial of degree $d$ with connected, locally connected non-smooth Julia set, and $x \in J$ be a repelling periodic point of period $p$,
then the local symmetry of the Julia set at $x$ is given by
$$
\Sym := \{f^{-pn} \circ g : n\in\N, g\in \Sigma_f, g(x) = x\}
$$
where the inverse branch is chosen so that $f^{-pn}(x) = x$.
\end{theorem}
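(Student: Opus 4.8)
The plan is to establish the two inclusions. For $\Sym\subseteq\{\text{local conformal symmetries of }J\text{ at }x\}$: any $g\in\Sigma_f$ with $g(x)=x$ is a polynomial commuting with some iterate $f^{n}$, hence preserves $J=J(f^{n})$ and is a local conformal symmetry of $J$ fixing $x$; and, $x$ being repelling of period $p$, the branch of $f^{-pn}$ with $f^{-pn}(x)=x$ is well defined, conformal and $J$-preserving near $x$, so it is such a symmetry as well. Compositions of these are again of the same form, which gives the inclusion; and the relations $g\circ f^{n}=f^{n}\circ g$, after passing to a common multiple of the exponents, show $\Sym$ is closed under composition and inversion, hence is a group.

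Conversely, let $S\colon(U_1,U_1\cap J)\to(U_2,U_2\cap J)$ be a local conformal symmetry with $x\in U_1\cap J$ and $S(x)=x$, with associated circle map $s\colon V_1\to V_2\subset S^1$. By Lemma \ref{RatDer}, $s(t)=at+b$ with $a\in\Q_{>0}$, $b\in\Q$. Pick an external angle $\theta$ landing at $x$; since $x$ is repelling of period $p$, both $\theta$ and $s(\theta)$ (the latter landing at $S(x)=x$) are $m_d$-periodic with period a multiple of $p$ — indeed $m_d^{k}\eta=\eta$ forces $f^{k}$ to fix $\mathcal R^{\eta}$, hence its landing point. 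Let $P$ be a common multiple of the two periods, so $m_d^{P}$ fixes both $\theta$ and $s(\theta)$. The first key observation is that \emph{$S$ commutes with $f^{P}$ on a neighbourhood of $x$}: the associated circle map of $S^{-1}\circ f^{P}\circ S$ is $s^{-1}\circ m_d^{P}\circ s$, an affine germ fixing $\theta$ with derivative $a^{-1}d^{P}a=d^{P}$, hence coinciding near $\theta$ with the circle map $m_d^{P}$ of $f^{P}$; therefore $S^{-1}\circ f^{P}\circ S$ and $f^{P}$ agree on the nonempty open set $U_1\setminus K$, and so, by the identity theorem, on a neighbourhood of $x$.

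The second key point, which I expect to be the main obstacle, is that $a$ is a \emph{$d$-unit} (a positive rational whose numerator and denominator involve only primes dividing $d$). The plan is to linearise $f^{P}$ at the repelling point $x$ by $\Lambda$, $\Lambda\circ f^{P}=\rho\,\Lambda$, $\rho=(f^{P})'(x)$; since $S$ commutes with $f^{P}$ near $x$, the conjugate $\Lambda\circ S\circ\Lambda^{-1}$ commutes with multiplication by $\rho$ and so is multiplication by $\mu=S'(x)$. By Tan Lei's Theorem (\ref{thm:tl}) the Julia set is asymptotically $\rho$-self-similar about $x$ with limit model $L=\Lambda(J)$, and Lemma \ref{limitmodelsimilar}, applied to $S$ together with the uniqueness of the limit model, forces $\mu L=L$, i.e.\ $\mu$ is a conformal similarity of $L$. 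Because $J$ is non-smooth, $L$ is not contained in a circle, so the group of conformal similarities of $L$ is discrete; consequently $\mu$ is a root of unity times an integer power of a fixed root of $\rho$, and — translating this back to the circle through the asymptotic relation between the scaling factor $\rho$ at $x$ and the factor $d^{P}$ on $S^1$ — the circle-derivative $a$ is forced to be a root of unity times a rational power of $d$, hence, being a positive rational, a $d$-unit. The discreteness of the similarity group of $L$ (equivalently: a local symmetry of $J$ fixing $x$ whose derivative is near $1$ must be essentially trivial) is where the group structure of $\Sym$ really enters, via an iteration/renormalisation argument in the spirit of Lemma \ref{RatDer} together with the density of gaps of the invariant lamination (Corollary \ref{dense}); I expect this to be the hardest step. (In the easier case where some external angle of $x$ is moved by $s$, one can short-circuit it: the finiteness of $\{s^{k}(\theta):k\in\Z\}$ and the affine form of $s^{k}$ directly force $a=1$.)

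Finally, granting that $a$ is a $d$-unit, the conclusion follows as in the proof of Theorem \ref{SymJulia}: for $n$ large, $d^{pn}a$ is a positive integer, and the circle map of $f^{pn}\circ S$ is $t\mapsto d^{pn}a\,t+d^{pn}b$, sending $\theta$ to $m_d^{pn}(s(\theta))$, both fixed by $m_d^{P}$; by Lemma \ref{ext} (with $P$ in the role of its period) $f^{pn}\circ S$ extends to a polynomial $g$ of degree $d^{pn}a$ with $g\circ f^{P}=f^{P}\circ g$, so $g\in\Sigma_f$ and $g(x)=x$, whence $S=f^{-pn}\circ g\in\Sym$.
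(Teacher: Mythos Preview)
Your overall architecture is right --- reduce to showing that the circle-derivative $a$ is a $d$-unit, then invoke Lemma~\ref{ext} --- and your bookkeeping for the easy inclusion and for the final step is fine. The problem is the ``second key point''.

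You propose to constrain $a$ by passing to the K\"onigs linearisation at $x$, obtaining $\mu=S'(x)$ as a similarity of the limit model $L$, and then ``translating this back to the circle through the asymptotic relation between the scaling factor $\rho$ at $x$ and the factor $d^{P}$ on $S^1$''. There is no such relation. The multiplier $\rho=(f^{P})'(x)$ is a (typically transcendental) complex number with $|\rho|>1$; the circle factor $d^{P}$ is an integer; no algebraic or asymptotic identity links them, and the B\"ottcher map does not extend conformally across $x$, so you cannot transport constraints on $\mu$ (relative to $\rho$) into constraints on $a$ (relative to $d$). Discreteness of the similarity group of $L$ only tells you that $\langle\mu,\rho\rangle$ is a discrete subgroup of $\C^{*}$, which says nothing about the prime factorisation of $a\in\Q_{>0}$. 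So this step, which you yourself flag as the hardest, does not go through.

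The paper avoids this entirely by staying on the circle. After post-composing with enough iterates of $f^{p}$ one may write $a=u/v$ with $(v,d)=1$. If $v\neq 1$ then $u/v$ and $d^{p}$ are multiplicatively independent in $\Q_{>0}$, hence generate a dense subgroup of $\R_{>0}$. Words in $s^{\pm1}$ and $m_d^{\pm p}$ are all associated circle maps of local symmetries at $x$, and a suitable limit produces one with \emph{irrational} derivative, contradicting Lemma~\ref{RatDer}. Thus $v=1$, the derivative is an integer, and Lemma~\ref{ext} applies directly --- no linearisation, no Tan Lei, no limit-model detour, and no preliminary step establishing commutation with $f^{P}$ is needed.
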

\addtocounter{theorem}{-1}
}

\begin{proof}
Let $S$ be a local symmetry of the Julia set at a repelling periodic point $x$.
Let $s(t) = \frac{u}{v}t+b$ be the associated circle map of $S$.
Post compose with the dynamics $f^p$, we may assume that $(v,d) = 1$.
If $v\neq 1$, then $\frac{u}{v}, d^p$ generate a dense subgroup of $\R_{>0}$. Hence, by taking limit, $s$ and $m_d^p$ generate a linear map with irrational derivative. This is not possible by Lemma \ref{RatDer}. Hence, $v=1$. Then by Lemma \ref{ext}, we know $S$ extends to a polynomial map and $S\circ f^p = f^p \circ S$ with $S(x) = x$, proving the Theorem.
\end{proof}

By conjugation, we may assume that $f$ is monic and centered.
By Ritt's Theorem on commuting polynomials (see section \ref{sec5}), if $g\in \Sigma_f$,
there exists a monic polynomial of the form $h(z) = z R(z^k)$, with $k \geq 1$, a $k$-th root of unity $\zeta$ and two positive integers $a, b$ such that
$$
g(z) = \zeta h^a(z), f^p(z) = h^b(z)
$$

If $f$ is not an iterate of a polynomial and $\Sigma_f^*$ is trivial, then $h(z) = f^l(z)$ for some $l$, and $k=1$. Hence $g$ is an iterate of $f$.

Theorem \ref{gen}' now follows immediately from Theorem \ref{gen} as the linear symmetry groups of a quadratic polynomial $f(z) = z^2+c$ is trivial when $c\neq 0$, and $f$ is not an iterate of any polynomial.

We will now prove Theorem \ref{lb}. For readers' convenience, we restate the Theorem \ref{lb} here.
{
\renewcommand{\thetheorem}{\ref{lb}}
\begin{theorem}
The limit models of different branch points of the Mandelbrot set are not similar.
\end{theorem}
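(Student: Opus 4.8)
The plan is to argue by contradiction: a similarity between the limit models of two distinct branch points of $M$ would, through Tan Lei's Theorem, produce a non-trivial local conformal symmetry between the Julia sets of the corresponding quadratic polynomials, which is forbidden by Theorem~\ref{SymJulia}'. So suppose $c_1\neq c_2$ are branch points of $M$ whose limit models are similar, i.e.\ one is carried to the other by multiplication by some $\mu\in\C^{*}$ (this is exactly the notion of similarity governed by Lemma~\ref{limitmodelsimilar}, whose relating map has $\C$-linear derivative). By the Branch Point Theorem, $c_1$ and $c_2$ are Misiurewicz points, so $f_{c_1}$ and $f_{c_2}$ are post-critically finite and their Julia sets $J_{c_1},J_{c_2}$ are connected (as $c_i\in M$) and locally connected. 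Moreover neither $c_i$ is $0$ or $-2$: the parameter $0$ lies in the interior of $M$, and $-2$ is the tip of the real antenna, for which $M\setminus\{-2\}$ stays connected, so neither is a branch point. Hence $f_{c_1}$ and $f_{c_2}$ satisfy the hypotheses of Theorem~\ref{SymJulia}'.

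Next I would unpack Tan Lei's Theorem. Write $x_{c_i}=f_{c_i}^{\,l_i}(c_i)$ for the repelling periodic point and $\psi_i$ for the corresponding linearization map, a biholomorphism from a neighborhood of $x_{c_i}$ onto its image; Tan Lei's Theorem identifies the limit model of $M$ at $c_i$ with $\tfrac{1}{(f_{c_i}^{\,l_i})'(c_i)}\,\psi_i\big(J_{c_i}\cap\overline U_i\big)$, a linearly rescaled copy of $J_{c_i}$ near $x_{c_i}$. The assumed similarity of these two limit models says that, after an explicit affine rescaling $z\mapsto\lambda z$ with $\lambda=\mu\,(f_{c_2}^{\,l_2})'(c_2)/(f_{c_1}^{\,l_1})'(c_1)$, the sets $\lambda\,\psi_1(J_{c_1}\cap\overline U_1)$ and $\psi_2(J_{c_2}\cap\overline U_2)$ coincide near $0$. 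Conjugating back, the map $S:=\psi_2^{-1}\circ(\lambda\,\cdot)\circ\psi_1$, restricted to a small enough neighborhood $U_1$ of $x_{c_1}$, is an orientation-preserving conformal homeomorphism taking $U_1\cap J_{c_1}$ onto $S(U_1)\cap J_{c_2}$, with $x_{c_1}\in U_1\cap J_{c_1}\neq\emptyset$; that is, a non-trivial local conformal symmetry of the two Julia sets. By Theorem~\ref{SymJulia}' this forces $f_{c_1}=f_{c_2}$, and hence $c_1=c_2$, a contradiction. Theorem~\ref{maintheorem} then follows exactly as in the argument already given.

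The step I expect to require the most care is the passage, in the previous paragraph, from the equality of the \emph{truncated} limit models (limit models being determined only up to the truncation $A\mapsto A_r$) to an honest local conformal symmetry. One must shrink $\overline U_1$ so that $\lambda\,\psi_1$ carries it into the domain of $\psi_2^{-1}$, confirm that $S$ then has open connected domain, and check that the coincidence of $\lambda\,\psi_1(J_{c_1}\cap\overline U_1)$ and $\psi_2(J_{c_2}\cap\overline U_2)$ inside a disk $D_r$ about $0$ truly yields $S(U_1\cap J_{c_1})=S(U_1)\cap J_{c_2}$ on genuine open neighborhoods rather than a weaker set-theoretic match. The remaining inputs — local connectedness of post-critically finite quadratic Julia sets, and the fact that $0$ and $-2$ are not branch points of $M$ — are standard.
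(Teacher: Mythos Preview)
Your proof is correct and follows essentially the same route as the paper's: Branch Point Theorem to get Misiurewicz points, Tan Lei's Theorem to convert similarity of limit models into a local conformal symmetry via the linearizing coordinates, and then the quadratic rigidity result to force $c_1=c_2$. You are simply more explicit than the paper in writing out $S=\psi_2^{-1}\circ(\lambda\,\cdot)\circ\psi_1$ and in excluding $c_i\in\{0,-2\}$ (the paper instead invokes the general Theorem~\ref{SymJulia}, whose smooth case already handles $-2$), and your caveat about passing from truncated limit models to an honest local symmetry is well-placed but not a gap.
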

\addtocounter{theorem}{-1}
}
\begin{proof}
Let $c_1$ and $c_2$ be two branch points of the Mandelbrot set with the same limit models.
By the Branch Theorem, $c_1$ and $c_2$ are Misiurewicz points. By Tan Lei's Theorem, the Mandelbrot is asymptotically self-similar at $c_1$ and $c_2$ with limit models given by the limit models of the critical periodic orbits of $f_{c_1}$ and $f_{c_2}$.
Since both $f_{c_1}$ and $f_{c_2}$ are post critically finite, their Julia sets are connected and locally connected.
By composing with the two linearization map, the similarity between the two limit models gives a local conformal symmetry between the two Julia sets $J_{c_1}$ and $J_{c_2}$.
Since the linear symmetry groups of $f(z) = z^2+c$ is trivial when $c\neq 0$,
by Theorem \ref{SymJulia}, the two polynomials $f_{c_1}$ and $f_{c_2}$ are conjugate to each other, hence $c_1 = c_2$.
\end{proof}

\section{Acknowledgments}
The author thanks C. T. McMullen for advice and helpful discussion on this problem.


%
%
%
%
%
%
%
\end{document}